\numberwithin{equation}{section}
\newtheorem{theorem}{Theorem}[section]
\newtheorem{lemma}[theorem]{Lemma}
\newtheorem{proposition}[theorem]{Proposition}
\theoremstyle{definition}
\newtheorem{remark}[theorem]{Remark}
\newcommand{\eps}{\varepsilon}
\newcommand{\la}{\lambda}
\renewcommand{\qed}{\hfill $\square$}
\def\bx{\mathbf x}
\def\CC{\mathbb C}
\def\ZZ{\mathbb Z}
\def\RR{\mathbb R}
\def\HH{\mathbb H}
\def\B{\mathcal B}
\def\R{\mathcal R}
\def\sX{\mathbf X}
    \def\La{\Lambda} 
\date{September 26, 2008} 
\begin{document}
\title[LDP for leaves in random trees]{Large deviations for the leaves in some random trees}

\author{Wlodek Bryc}
\address{
Department of Mathematical Sciences, University of Cincinnati, 2855
Campus Way, PO Box 210025, Cincinnati, OH 45221-0025, USA}
\thanks{Research supported in part by Taft Research Seminar 2006/07 and 2008/09.}
\email{Wlodzimierz.Bryc@UC.edu}

\author{David Minda}
\thanks{Research supported in part by Taft Research Seminar 2008/09.}
\address{
Department of Mathematical Sciences, University of Cincinnati, 2855
Campus Way, PO Box 210025, Cincinnati, OH 45221-0025, USA}
\email{David.Minda@math.uc.edu }

\author{Sunder Sethuraman}
\thanks{ Research  supported in part by NSF
grant DMS-0504193.}

\address{
Department of Mathematics, 396 Carver Hall, Iowa State University, Ames, IA 50011, USA
}
\email{sethuram@iastate.edu}
%






\begin{abstract}
Large deviation principles and related results are given for a
class of Markov chains associated to the ``leaves" in random recursive
trees and preferential attachment random graphs, as well as the ``cherries'' in Yule trees.
 In particular, the method of proof, combining
analytic and Dupuis-Ellis type path arguments, allows for an explicit computation of the
large deviation pressure.
\end{abstract}

\subjclass[2000]{60F10;05C80}

\keywords{large deviation, central limit, preferential attachment,
planar oriented, uniformly random trees, leaves, cherries, Yule, random Stirling permutations}

\maketitle
\section{Introduction and results}

We consider in this article large deviations and related laws of
large numbers and central limit theorems for a class of Markov
chains associated to the number of leaves, or nodes of degree one, in preferential attachment random graphs and
random recursive trees, and also the number of ``cherries," or pairs of leaves with a common parent, in Yule trees.
The random
graphs studied model various networks such as pyramid schemes, chemical
polymerization, the internet, social structures, genealogical families, among others.
In particular, the leaf and cherry counts in these models are of interest, and
have concrete interpretations. In Subsection \ref{Models}, we
discuss applications with these models and literature.

Define the nondecreasing Markov chain $\{Z_n: n\geq 1\}$ starting
from initial state $Z_1=k_0\geq 0$,
by its one-step
transitions, for $n\geq 1$,
\begin{equation}
  \label{Z-def}
  \Pr\big(Z_{n+1}-Z_n=v|Z_n\big)\ =\ \begin{cases}
1-\frac{Z_n}{s_n} & \mbox{ if $v=1$} \\
\frac{Z_n}{s_n} & \mbox{ if $v=0$} \\
  \end{cases}
\end{equation}
where $\{s_n: n\geq 1\}$ is a sequence of positive numbers such that
\begin{equation}
  \label{s_n}
  s_n\geq k_0+n-1, \mbox{ and } \frac{s_n}{n} \rightarrow \alpha  \mbox{ for } 1<
\alpha<\infty
\end{equation}
with convention $0/0 = 0$.
Additionally, we also consider two special sequences, $s_n=n$
and $s_n=n/2$ with $k_0\leq s_1$,
related to some applications.

We first note, in this form, $Z_n$ can be seen to represent the ``mass'' of red
balls in a Polya-Eggenberger-type urn of two colors, red and blue,
not necessarily tenable, where at each time $n$, proportional to the
red balls mass, a (signed) mass $s_{n+1}-s_n$ of blue balls is
deposited, otherwise one red ball and mass $s_{n+1}-s_n -1$ of blue
balls is added.

Also, when $s_n = (k_0 + 2(n-1) +n\beta)/(1+\beta)$ and
$\alpha= (2+\beta)/(1+\beta)$ for $\beta>-1$, the chain has
interpretation as the number of leaves in preferential attachment
random graphs with weight function $f(k)=k+\beta$. Later, in
Subsection \ref{Models},
we also remark that $s_n$
can be taken as a random sequence with respect to the number of ``generalized" leaves, or ``buds,"
that is those nodes, possibly with degree greater than one, which however connect to only one other vertex,
in preferential
attachment graphs with random edge additions.

In addition, when $s_n = \alpha n$ for $\alpha = 1,2$,
$Z_n$ is also the number of
leaves in uniformly and planar oriented recursive trees respectively.
In the latter case $\alpha =2$, $Z_n$ is also
the number of ``plateaux" in a random Stirling
permutation of length $2n$.

Moreover, when $s_n = n/2$, $Z_n$ can be seen as the number of cherries in Yule trees.

As the urns add mass of the opposite color,
one should expect almost sure limits for the mean behavior and
Normal fluctuations.  In fact, by mostly martingale, combinatorial and urn methods, laws of large numbers (LLN) and
central limit theorems (CLT) have been proved, at least in the
examples mentioned above when $s_n$ is linear with slope $\alpha$ (cf. Theorem \ref{T4}).
See
 \cite{BRST-01,Mori-01,Rudas-Toth-Valko-07}
with respect to preferential attachment,
\cite{Janson04}, \cite{Mahmoud-Smythe} with respect to recursive trees, \cite{McKenzie-Steel-00} with respect to Yule trees, and
\cite{Kotz-Balakrishnan} with respect to urns when $\alpha$ is a
positive integer.

Characterizing the associated large deviations is a natural problem which gives insight into the properties of rare events,
and seems less studied in urns or random graphs.  Previous work has
concentrated on analytic methods with respect to ``subtraction" urn
models--not applicable in our general setting
\cite{Flajolet-Gabarro-Pekari-05}--or extensions of the Dupuis-Ellis
weak convergence approach (cf. \cite{Dupuis-Ellis-97}) to different
allocation models than ours \cite{Dupuis-Nuzman-Whiting-04}, \cite{Zhang-Dupuis-08}.
We note also
some exponential bounds via martingale concentration inequalities
are found in the case $s_n$ is linear with slope $\alpha$
\cite{Chung-Lu}.  See also
 \cite{bakhtin-2008a,Benassi-96,Broutin-Devroye-06,Dembo-Morters-Sheffield-05,dereich-2008,Jabbour-Hattab}
 for other types of large deviations work in various random tree models.

In this context, our main results are to prove a large deviation
principle (LDP) for $Z_n/n$ with an explicitly computed ``pressure,''
or Legendre transform of the associated rate function (Theorem
\ref{T3}). This is done in two different ways for the important case
$s_n$ is linear with $\alpha =2$. Such explicit computations are not commonplace,
and our ``ODE'' method is quite different from the methods in
\cite{Flajolet-Gabarro-Pekari-05} where a quasi-linear PDE is
solved, or in \cite{Zhang-Dupuis-08} where a finite-dimensional
minimization problem is obtained.

Perhaps a main feature of our work is that the method given appears
robust and applicable in diverse, not necessarily urn settings.  In
particular, we show the LDP for $Z_n/n$ does not feel the disorder in
the sequence $s_n$, is not dependent on the initial value $Z_1$, and
is the same as for the chain with a regular, linear $s_n$ with slope
$\alpha$. We mention that this is a consequence of a large deviation
principle for the path interpolation of $Z_{\lfloor nt\rfloor}/n$ (Theorem \ref{T2}),
perhaps of interest in itself, that we establish, by the
Dupuis-Ellis weak convergence approach.

In addition, aside from laws of large numbers which are trivially
obtained, we prove a central limit theorem for $Z_n$ through complex
variables arguments with the pressure (Theorem \ref{T4}). These
alternate proofs of the LLN and CLT, although indirect, apply when
the mass additions take on some negative non-integer values where
less is known in the literature. Moreover, the results
give a ``quenched'' LLN, CLT and LDP's with respect to ``generalized" leaves or buds in a
preferential attachment scheme with random edge additions (see
Subsection \ref{Models}).

Our technique to prove the LDP for $Z_n/n$ is to
consider the recurrence
relation for $m_n(\lambda) = E[\exp\{\lambda Z_n\}]$ obtained from
(\ref{Z-def}):
\begin{equation}\label{recursion}
 m_{n+1}(\lambda) \ = \ ({1-e^\lambda})
\frac{m'_n(\lambda)}{s_n} + e^\lambda m_n(\lambda).\end{equation}
Dividing through by $m_n(\lambda)$, we write
\begin{equation}\label{recursion_divided_through}
\frac{m_{n+1}(\lambda)}{m_n(\lambda)} \ = \
\frac{1-e^\lambda}{s_n/n} \frac{m'_n(\lambda)}{nm_n(\lambda)} +
e^\lambda.\end{equation}
The idea now is to take the limit on $n$ in
the above display.  When the ``pressure" $\Lambda$ exists, it satisfies
$\Lambda(\lambda)  =  \lim_{n\rightarrow \infty} (1/n)\log
m_n(\lambda)$.  In this case, it is natural to suppose that the
limits
\begin{eqnarray}
\Lambda'(\lambda) &=& \lim_{n\rightarrow \infty}
\frac{m_n' 
(\lambda)}{nm_n(\lambda)}
\label{pressure_derivative_limit}
\\
e^{\Lambda(\lambda)} & = & \lim_{n\rightarrow\infty}
\frac{m_{n+1}(\lambda)}{m_n(\lambda)}\label{exponential_limit}
\end{eqnarray}
 both exist.  Then,
from (\ref{recursion_divided_through}), we can write the ODE
\begin{equation}\label{ODE}e^{\Lambda(\lambda)} \ = \ \frac{1-e^\lambda}{\alpha}\Lambda'(\lambda) +
e^\lambda;
\ \ \ \La(0)=0.\end{equation} One can compute the solution of this
differential equation (cf. (\ref{Pressure_2})) and show it is
unique.

The main task is to show that the pressure and limits (\ref{pressure_derivative_limit}) and
(\ref{exponential_limit}) exist. But, the pressure exists as a
consequence of the path LDP for $Z_{\lfloor nt\rfloor}/n$ by contraction
principle. We note, in principle, one can try to compute the
pressure or the rate function from (\ref{I Path I})
 by the calculus
of variations, but we found it difficult to solve the associated
Euler equations (cf. near \eqref{PL}).

Finally, we show (\ref{pressure_derivative_limit}) and
(\ref{exponential_limit}) exist by extending $m_n(\lambda)$ to the
complex plane, and then analyzing its zeroes and analytic
properties. These estimates are also useful for the central limit
theorem arguments.

We now mention a different approach, in the spirit of
\cite{Flajolet-Gabarro-Pekari-05}, when $s_n$ is linear with slope
$\alpha =2$ and also interestingly $\alpha =1/2,1$, to compute the
pressure from analysis of the generating function $G(\lambda,z)=
\sum_{n\geq 1}m_n(\lambda)z^{n-1}$.  From (\ref{recursion}), we can
write the linear PDE
\begin{equation}\label{generating_function_pde}\frac{\partial
G}{\partial z}(1-e^\lambda z) + \frac{e^\lambda
-1}{\alpha}\frac{\partial G}{\partial \lambda} \ = \ e^\lambda G
.\end{equation} One can solve implicitly this PDE, and locate at
least heuristically a singular point.  Then, formally, from root
test asymptotics, the pressure would be the reciprocal of the
location of the singularity.

The difficulty is in
establishing the analyticity of the solution and identifying
its singularity.  For urn models of ``subtraction type'' where the
mass added is an integer, and which satisfies a balance condition,
\cite{Flajolet-Gabarro-Pekari-05} uses this program to obtain large deviations
and the CLT.
 However, the cases $s_n$ is linear with slope
$\alpha=1/2,1,2$, and more generally the urns associated with
non-integer $s_n$
are not covered by their arguments which seem to rely on
integer additions with a certain ``negative" structure. On the other
hand, we are able to supply the needed analyticity and singularity
identification when $s_n$ has
slopes $\alpha =1/2,1,2$, and in
this way prove
the
LDP for $Z_n/n$ (Theorem \ref{T3}) in these cases.

The plan of the paper is to state the results in Subsection
\ref{Results}, discuss applications to random graphs in Subsection
\ref{Models}, give the generating function proof of Theorem \ref{T3}
in Section \ref{PT1}, prove the path LDP (Theorem \ref{T2}) in
Section \ref{PT2}, give the ODE-method proof of Theorem \ref{T3} and
prove the LLN and CLT (Theorem \ref{T4}) in Section
\ref{ODE_section}, and conclude in Section \ref{Concludingremarks}.

\subsection{Results}
\label{Results} We recall the setting for large deviations.
A sequence $\{X_n\}$ of random variables with values in a separable
complete metric space $\sX$ satisfies the large deviation principle
(LDP) with speed $n$ and rate function $I:\sX\to[0,\infty]$ if $I$
has compact level sets $\{\bx:I(\bx)\leq a\}$, and for every Borel
set $U\in\B_\sX$, \begin{eqnarray}
  - \inf_{\bx\in
U^\circ} I(\bx) &\leq &\liminf_{n\to\infty}\frac1n\log\Pr(  X_n\in
U)\nonumber\\
&\leq&  \limsup_{n\to\infty}\frac1n\log\Pr(  X_n\in  U)  \ \leq \ -
\inf_{\bx\in \bar U} I(\bx). \label{LDP_two_bounds}
\end{eqnarray}
[Here $U^\circ$ is the interior of $U$ and $\bar{U}$ is the closure
of $U$.]

Often the rate function is given in terms of the Legendre transform
of the ``pressure" when it exists.  When $\sX=\RR$, this
representation takes the form
\begin{equation}
  \label{Def I}
  I(x)\ =\ \sup_{\la\in\RR}\big\{ \la x-\log \La(\la)\big\},
\end{equation}
where we recall the pressure satisfies
\begin{equation}
\label{1D Pressure}
\La(\la)\ :=\ \lim_{n\to\infty}\frac{1}{n}\log E[e^{\la Z_n}].
\end{equation}


Recall now the Markov chain $Z_n$ (\ref{Z-def}) corresponding to sequence $\{s_n\}$
and parameter
$1< \alpha<\infty$ (\ref{s_n}).

\begin{theorem}\label{T3}
The sequence $Z_n/n$ satisfies LDP with speed $n$ and good strictly
convex rate function
 $I$ given by \eqref{Def I} with pressure
\begin{equation}
\La(\la)\ =\  - \log \left( \frac{\alpha}{e^\la-1}
\int_0^\lambda\left(\frac{e^s -1}{e^\la-1}\right)^{\alpha -1}
ds\right) \ \ \ \  \ {\rm for \ }\lambda \neq
0\label{Pressure_2}\end{equation} and $\Lambda(0)= 0$.
\end{theorem}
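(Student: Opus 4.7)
The plan is to identify the pressure $\Lambda(\lambda)=\lim_{n\to\infty}(1/n)\log m_n(\lambda)$ in closed form by passing to the limit in the recurrence \eqref{recursion_divided_through} and solving the resulting ODE \eqref{ODE}; the rate function is then recovered as $I=\Lambda^*$ by Legendre duality. Existence of $\Lambda$ (without the explicit form) is handed to us by the path LDP: applying the contraction principle to Theorem~\ref{T2} via the endpoint map $\gamma\mapsto \gamma(1)$ yields an LDP for $Z_n/n$ on $[0,1]$ with some good convex rate $I$, and since $0\le Z_n/n\le 1$ is bounded, Varadhan's lemma then produces $\Lambda$ on all of $\mathbb{R}$ together with $I=\Lambda^*$. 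What remains is to identify $\Lambda$ with \eqref{Pressure_2}.

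The key technical input is to promote this scalar convergence to the two derivative-level limits \eqref{pressure_derivative_limit} and \eqref{exponential_limit}, so that passage to the limit in \eqref{recursion_divided_through} is legitimate. From \eqref{recursion} one checks by induction that $m_n(\lambda)=e^{k_0\lambda}P_n(e^\lambda)$ for a polynomial $P_n$, so each $m_n$ is entire and strictly positive on $\mathbb{R}$. A Vitali--Porter argument then applies: first establish a locally uniform upper bound on $(1/n)\log|m_n(\lambda)|$ on a complex tube $\{|\mathrm{Im}\,\lambda|<\delta\}$ around $\mathbb{R}$ and control the zeros of $m_n$ away from this tube so that $\log m_n/n$ is a well-defined analytic family there, and then use Vitali--Porter with Weierstrass convergence to conclude local uniform convergence of $(1/n)\log m_n$ and of its derivatives in a complex neighborhood of $\mathbb{R}$, giving \eqref{pressure_derivative_limit}--\eqref{exponential_limit}.

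With these limits in hand, letting $n\to\infty$ in \eqref{recursion_divided_through} and using $s_n/n\to\alpha$ produces \eqref{ODE}, equivalently
\[
\alpha\bigl(e^{\Lambda(\lambda)}-e^\lambda\bigr)\ =\ (1-e^\lambda)\Lambda'(\lambda),\qquad \Lambda(0)=0.
\]
The substitution $v(\lambda):=e^{-\Lambda(\lambda)}$ linearises this nonlinear equation to
\[
(e^\lambda-1)v'\,+\,\alpha e^\lambda v\ =\ \alpha,\qquad v(0)=1,
\]
whose integrating factor is $(e^\lambda-1)^\alpha$. Integrating $\bigl((e^\lambda-1)^\alpha v\bigr)'=\alpha(e^\lambda-1)^{\alpha-1}$ from $0$ to $\lambda$---the boundary term at $0$ vanishes because $\alpha>1$---yields
\[
v(\lambda)\ =\ \frac{\alpha}{(e^\lambda-1)^\alpha}\int_0^\lambda(e^s-1)^{\alpha-1}\,ds,
\]
which is exactly \eqref{Pressure_2} after writing $\Lambda=-\log v$. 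Strict convexity of $\Lambda$, and hence of $I=\Lambda^*$ on the interior of its effective domain, follows by a direct check on the closed-form expression, using $\alpha>1$.

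The main obstacle is the analytic step of the second paragraph. Because $s_n$ is only required to be \emph{asymptotically} linear with slope $\alpha$, and the underlying urn need not fit the integer ``subtraction'' framework exploited in \cite{Flajolet-Gabarro-Pekari-05}, a direct singularity analysis of the generating function $G(\lambda,z)$ is unavailable, and the required locally uniform bounds and zero-free region for $m_n$ must be extracted abstractly from the recursion itself. Once this Vitali-type input is secured, the ODE reduction and closed-form computation are routine.
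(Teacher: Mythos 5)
Your overall strategy coincides with the paper's: contraction from Theorem~\ref{T2} plus Varadhan's lemma to get existence of $\La$, a normal-family/Vitali argument to upgrade to the derivative-level limits \eqref{pressure_derivative_limit}--\eqref{exponential_limit}, passage to the limit in \eqref{recursion_divided_through} to obtain \eqref{ODE}, and then an explicit integration (your integrating-factor computation with $v=e^{-\La}$ is a clean constructive version of what the paper does by verification plus a uniqueness argument for the singular initial condition at $\la=0$).

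However, there is a genuine gap at precisely the step you flag as ``the main obstacle'' and then leave unproved: the zero-free region for $m_n$ in a complex strip. You assert that the zeros of $m_n$ can be ``controlled away from the tube'' and ``extracted abstractly from the recursion itself,'' but no abstract argument is offered, and without a zero-free neighborhood of $\RR$ the functions $n^{-1}\log m_n$ are not even single-valued analytic there, so the Vitali--Porter/Weierstrass step cannot start. The paper's Proposition~\ref{CL4.1} supplies the missing input by a concrete structural fact: writing $m_n(\la)=p_n(e^\la)$, the recursion \eqref{recursion} becomes
\[
p_{n+1}(u)\;=\;\frac{u}{s_n}(1-u)^{s_n+1}\frac{d}{du}\Bigl[(1-u)^{-s_n}p_n(u)\Bigr],
\]
and an induction using Rolle's theorem (adapted from B\'ona's real-rootedness argument for Stirling permutation statistics) shows that every $p_n$ has only real, non-positive roots --- a root of multiplicity $k_0$ at $u=0$ and $n-1$ simple negative roots, with interlacing. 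Since $e^z\notin(-\infty,0]$ for $|\Im z|<\pi$, this yields $m_n(z)\neq 0$ on that strip, after which the Borel--Carath\'eodory inequality applied to $L_n=\log m_n$ (using $|m_n(z)|\le m_n(|z|)\le e^{(n+k_0)|z|}$ and $m_n(t)\ge e^{-n|t|}$) gives the normal family you need. This real-rootedness is the heart of the proof and does not follow from soft considerations; as written, your argument is incomplete without it.
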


\begin{remark}
\label{rmk1} \rm We note, when
$s_n=\alpha n$ with
$\alpha
=1/2,1$ and $k_0\leq s_1$, the LDP is also found, with pressure given by formula
(\ref{Pressure_2}), using the generating function approach (cf.
Section \ref{PT1}).
For $\alpha =1/2$ and integer $\alpha\geq 1$, the integral in \eqref{Pressure_2} can
be evaluated explicitly.
\end{remark}

We now consider the LDP for the family of stochastic
processes $\{X_n(t): 0\leq t\leq 1\}$ obtained by linear
interpolation of the Markov chain \eqref{Z-def},
$$X_n(t)\ :=\ \frac{1}{n} Z_{\lfloor nt\rfloor-k_0+1}+\frac{nt-\lfloor nt\rfloor}{n}(Z_{\lfloor nt\rfloor-k_0+2}- Z_{\lfloor nt\rfloor-k_0+1}) \ \ {\rm for \ } t\geq \frac{k_0}{n}$$
and
 $X_n(t):=t$ for $0\leq t\leq k_0/n$.
The trajectories of $X_n(t)$ are non-decreasing Lipschitz functions with constant at most $1$.
\begin{theorem}
\label{T2}
As a sequence of $ C([0,1];\RR) $-valued random variables, $X_n$ satisfies the LDP with the rate function
$I: C([0,1];\RR)\to [0,\infty]$ given by
\begin{equation}\label{I Path I}
  I(\varphi)\ =\ \int_0^1\left[\dot\varphi(t)\log\frac{\alpha t \dot\varphi(t)}{\alpha t-\varphi(t)}
  + (1-\dot\varphi(t))\log\frac{\alpha t (1-\dot\varphi(t))
  }{\varphi(t)}\right]\,dt
\end{equation}
if $\varphi(t)$ is differentiable for almost all $t$,  $\varphi(0)=0$, $0\leq\dot\varphi \leq1$, and
the integral converges; otherwise, $I(\varphi)=\infty$.
\end{theorem}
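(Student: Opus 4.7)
The plan is to use the Dupuis--Ellis weak convergence approach, establishing the equivalent Laplace principle: for every bounded continuous $F:C([0,1];\RR)\to\RR$,
\begin{equation*}
-\lim_{n\to\infty}\frac{1}{n}\log E\bigl[e^{-nF(X_n)}\bigr] \;=\; \inf_{\varphi\in C([0,1];\RR)}\bigl\{F(\varphi)+I(\varphi)\bigr\}.
\end{equation*}
The starting point is the representation formula. Writing the increments of $Z_k$ as Bernoulli$(p_k)$ random variables with $p_k=1-Z_k/s_k$, one has
\begin{equation*}
-\frac{1}{n}\log E\bigl[e^{-nF(X_n)}\bigr] \;=\; \inf_{\bar p}\,\bar E\!\left[F(\bar X_n)+\frac{1}{n}\sum_{k=1}^{n-1} r(\bar p_k\|p_k)\right],
\end{equation*}
the infimum running over predictable $\bar p_k\in[0,1]$, where $\bar X_n$ is the interpolation of the controlled chain $\bar Z_n$ with transition $\bar p_k$, and $r(q\|p)=q\log(q/p)+(1-q)\log((1-q)/(1-p))$ is the Bernoulli relative entropy. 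Because trajectories of $X_n$ and $\bar X_n$ are Lipschitz with constant $1$ and start near $0$, they lie in a compact subset of $C([0,1];\RR)$, so exponential tightness is automatic and the Laplace principle will yield the LDP with a good rate function.

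For the Laplace \emph{upper bound}, take a nearly optimal control $\bar p^n$. By tightness, along a subsequence $\bar X_n\Rightarrow \varphi$ in $C([0,1];\RR)$, and the joint empirical measures
\begin{equation*}
\mu_n(dt\,dq)\;:=\;\frac{1}{n}\sum_{k=1}^{n-1}\delta_{(k/n,\bar p_k^n)}(dt\,dq)
\end{equation*}
converge along a further subsequence to some $\mu(dt\,dq)=\eta_t(dq)\,dt$. Since $\bar Z_k/n-\bar Z_1/n$ is the partial sum of $\bar p^n_j$ plus a martingale whose quadratic variation is $O(1/n)$, standard arguments give $\dot\varphi(t)=\int q\,\eta_t(dq)$ for a.e.\ $t$, while $s_{\lfloor nt\rfloor}/n\to\alpha t$ and $\bar Z_{\lfloor nt\rfloor}/n\to\varphi(t)$, so $p_k^n\to 1-\varphi(t)/(\alpha t)$ along the relevant scales. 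Using the lower semicontinuity and joint convexity of relative entropy together with Jensen's inequality on $\eta_t$,
\begin{equation*}
\liminf_{n\to\infty}\frac{1}{n}\sum_{k=1}^{n-1} r(\bar p_k^n\|p_k^n)\;\geq\;\int_0^1 r\!\left(\dot\varphi(t)\Bigm\|1-\tfrac{\varphi(t)}{\alpha t}\right)dt\;=\;I(\varphi),
\end{equation*}
matching the stated rate function. Combining with $F(\bar X_n)\to F(\varphi)$ yields the upper Laplace bound. Note that this argument is essentially insensitive to the disorder in $\{s_n\}$ because only the limiting slope $\alpha$ enters.

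For the Laplace \emph{lower bound}, given absolutely continuous $\varphi$ with $\varphi(0)=0$, $0\leq\dot\varphi\leq 1$ and $I(\varphi)<\infty$, use the deterministic feedback control $\bar p_k=\dot\varphi(k/n)$ (or a mollified version $\dot\varphi_\varepsilon$). A straightforward $L^2$ estimate, together with the Lipschitz bound on trajectories, shows $\bar X_n\to\varphi$ uniformly in probability, while $(1/n)\sum_k r(\bar p_k\|p_k^n)\to I(\varphi)$ by dominated/monotone convergence. Taking $\varepsilon\downarrow 0$ and invoking lower semicontinuity gives
\begin{equation*}
\limsup_{n\to\infty}-\frac{1}{n}\log E\bigl[e^{-nF(X_n)}\bigr]\;\leq\; F(\varphi)+I(\varphi).
\end{equation*}
Infimizing over $\varphi$ completes the Laplace principle. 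The main technical obstacles are: (i) handling the singularity of the integrand as $t\downarrow 0$, where $\varphi(t)/(\alpha t)$ and $(\alpha t-\varphi(t))/(\alpha t)$ can be degenerate — one deals with this by treating $t\in[0,\delta]$ separately via the trivial bound on Lipschitz trajectories and letting $\delta\downarrow 0$; and (ii) showing that the irregular sequence $s_n$ can be replaced by $\alpha n$ in the entropy cost without changing the limit, which follows from $s_n/n\to\alpha$ uniformly on compact time scales together with uniform continuity of $r(\cdot\|\cdot)$ on compact subsets of $(0,1)^2$. Lower semicontinuity and compactness of level sets of $I$ then follow either from the general Laplace/LDP equivalence on Polish spaces or directly from the convexity of $r$.
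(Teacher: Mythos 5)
Your proposal follows the same Dupuis--Ellis weak-convergence route as the paper, and your Laplace upper bound (nearly optimal controls, tightness, lower semicontinuity of relative entropy plus Jensen) is in outline the paper's argument. The genuine gap is in the lower bound, specifically in how you dispose of the small-time singularity. You propose the naive control $\bar p_k=\dot\varphi(k/n)$ and claim the contribution from $t\in[0,\delta]$ is handled "via the trivial bound on Lipschitz trajectories." That bound gives only $\bar X^n_j\leq j/n$ and $\bar X^n_j\geq 1/n$, whereas the running cost per step contains the term $-(1-\bar p_j)\log\big(\bar X^n_j/\sigma_n(j/n)\big)$, which is as large as $\log n$ when $\bar X^n_j$ is of order $1/n$. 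Summing over $j\leq \delta n$ and dividing by $n$ yields a bound of order $\delta\log n$, which diverges for fixed $\delta$; so the early-time cost is not trivially negligible, and dominated convergence for $(1/n)\sum_k r(\bar p_k\|p^n_k)\to I(\varphi)$ has no dominating function. Note also that finiteness of $I(\varphi)$ does not force $\varphi(t)/t$ to stay bounded below (e.g.\ $\varphi(t)\sim t/\log(1/t)$ has finite rate), so one cannot assume the target path itself provides the needed lower bound.

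The missing idea is a regularization toward the diagonal \emph{before} mollifying in time: replace $\psi^*$ by $\psi_\theta=(1-\theta)\psi^*+\theta t$, which by convexity of the rate function (the line $y=t$ has finite rate) changes $I$ and $h$ by at most $\eps$, and forces $\dot\psi_\theta\geq\theta$ and hence $\psi_\theta(t)/t\geq\theta$. With this, a maximal/fourth-moment inequality shows the controlled path satisfies $\bar X^n_j\geq \theta j/(2n)$ off an event of probability $O(n^{-3/2})$, and on that good event the cost over $[0,\delta]$ is of order $-\delta\log(\delta\theta\eta)$ (with $\eta$ from $s_n/n\to\alpha>1$), which is made small by choosing $\delta$ last; the residual range $j\leq n^{7/8}$ is handled by the crude $\log n$ bound since $n^{-1/8}\log n\to0$. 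Your time-mollification $\dot\varphi_\eps$ is the paper's $\varphi_\kappa$ and is a separate, additional step. Relatedly, your obstacle (ii) is not resolved by "uniform continuity of $r$ on compact subsets of $(0,1)^2$" alone: the base success probability $1-\bar X^n_j/\sigma_n(j/n)$ is only guaranteed to lie in a fixed compact subset of $(0,1)$ because $\alpha>1$ gives $\bar X^n_j/\sigma_n(j/n)\leq 1-\eta$ and the $\theta$-regularization keeps it away from $0$ for $t\geq\delta$; this is precisely why the path-level theorem excludes $\alpha\leq1$, a restriction your argument should surface.
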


By the contraction principle,  Theorem \ref{T2} implies the LDP for
$Z_n/n$ with the rate function given by the variational expression
\begin{equation}I(x) \ = \ \inf\Big\{I(\varphi): \; \varphi(0)=0,\; \varphi(1)=x\Big\}.
  \label{contracted_rate}
  \end{equation}
In general, optimal trajectories are not straight lines---exceptions are the LLN trajectory
$\varphi_\alpha(t) = t \alpha/(\alpha +1) $
and the extreme case $\varphi(t) = t$---but they try to stay near the LLN line (for which $I(\varphi_\alpha)=0$)
to minimize cost before going to destination $x$ (cf. Fig. \ref{Fig5}).

\begin{figure}[hbt]\begin{center}
\includegraphics[height=1.5in]{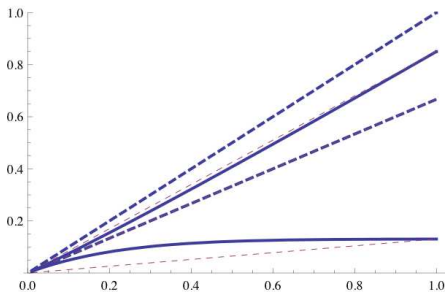} 
\end{center}
\caption{\label{Fig5}Thick curves are numerical solutions of the Euler equations for \eqref{contracted_rate} with $\alpha=2$ for  $x=0.13, 2/3, 0.85, 1$. Dashed lines are straight lines from $(0,0)$ to  $(1,x)$.}
\end{figure}

Lemmas for the proof of Theorem \ref{T3}
 give Normal
approximation.  The law of large numbers also follows
from Theorem \ref{T3}.
\begin{theorem}\label{T4}
We have
\begin{eqnarray*}
\frac{Z_n}{n}&  \xrightarrow{\rm a.s.}&
\frac{\alpha}{\alpha+1}
\end{eqnarray*}
and also
\begin{eqnarray*}
\frac{1}{\sqrt{n}}(Z_n-E[ Z_n])&\xrightarrow{\rm\; D\;}&
N(0,\sigma^2)
\ \ \ {\rm where \ \ }\sigma^2\ =\
\frac{\alpha^2}{(1+\alpha)^2(2+\alpha)}.
\end{eqnarray*}
\end{theorem}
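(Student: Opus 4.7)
The law of large numbers is an immediate consequence of Theorem \ref{T3}. A power-series expansion of
$$f(\lambda):=\frac{\alpha}{(e^\lambda-1)^\alpha}\int_0^\lambda(e^s-1)^{\alpha-1}\,ds,$$
carried out cleanly via the substitution $u=e^s-1$ together with $1/(1+u)=\sum_{k\ge 0}(-u)^k$, gives
$$f(\lambda)=1-\tfrac{\alpha}{\alpha+1}\lambda+\tfrac{\alpha^2}{2(\alpha+1)(\alpha+2)}\lambda^2+O(\lambda^3),$$
so $\Lambda(\lambda)=-\log f(\lambda)$ satisfies $\Lambda'(0)=\alpha/(\alpha+1)$; by duality this is the unique zero of the strictly convex rate function $I$ in \eqref{Def I}. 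For any $\epsilon>0$, the LDP upper bound \eqref{LDP_two_bounds} yields $\Pr(|Z_n/n-\alpha/(\alpha+1)|>\epsilon)\le e^{-cn}$ for some $c>0$ and all large $n$, and Borel--Cantelli delivers the stated a.s.\ convergence.

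\textbf{CLT.} For the CLT I would work with the moment generating function $m_n(\lambda)=E[e^{\lambda Z_n}]$ extended to a complex neighborhood of $\lambda=0$. The plan is to establish a quasi-power expansion of the form
$$m_n(\lambda)=e^{n\Lambda(\lambda)}\bigl(A(\lambda)+o(1)\bigr)\qquad\text{uniformly on compact subsets of a complex disc around } 0,$$
for some analytic $A$ with $A(0)=1$. This is precisely the content of the complex-analytic estimates referenced just before Theorem \ref{T4}: one iterates \eqref{recursion} in the complex plane, locating the zeros of $m_n$ away from a neighborhood of $0$ and sharpening the limits \eqref{pressure_derivative_limit}--\eqref{exponential_limit} to uniform convergence with a genuine prefactor. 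Cauchy's estimates applied at $\lambda=0$ then upgrade this to $E[Z_n]=n\Lambda'(0)+A'(0)+o(1)$.

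\textbf{Extracting the limit.} Evaluating at $\lambda=i\theta/\sqrt n$ and using $\Lambda(0)=0$,
$$\log m_n(i\theta/\sqrt n)=i\theta\sqrt n\,\Lambda'(0)-\tfrac{\theta^2}{2}\Lambda''(0)+\tfrac{i\theta A'(0)}{\sqrt n}+o(1),$$
so the $\Lambda'(0)$ and $A'(0)$ contributions cancel exactly against $i\theta E[Z_n]/\sqrt n$, giving
$$E\bigl[\exp\bigl(i\theta(Z_n-E[Z_n])/\sqrt n\bigr)\bigr]\longrightarrow e^{-\theta^2\Lambda''(0)/2}.$$
Continuing the expansion of $f$ above to second order and simplifying yields $\Lambda''(0)=\alpha^2/((\alpha+1)^2(\alpha+2))=\sigma^2$, and L\'evy's continuity theorem finishes.

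\textbf{Main obstacle.} The substantive work is the complex-analytic zero-location and quasi-power lemmas that underpin Theorem \ref{T3}: these must deliver the expansion with the prefactor $A(\lambda)$, not merely $\log m_n/n\to\Lambda$, and must be uniform on a genuine complex neighborhood of $0$ so that Cauchy's estimates govern $E[Z_n]$ to order $O(1)$. Granted those lemmas, Theorem \ref{T4} reduces to Taylor calculus on the explicit pressure \eqref{Pressure_2}, and the identity $\Lambda''(0)=\sigma^2$ is essentially the only new computation.
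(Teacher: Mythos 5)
Your LLN argument is the paper's: the unique zero of the strictly convex rate function is $\Lambda'(0)=\alpha/(\alpha+1)$, the LDP upper bound gives exponential decay off a neighborhood of that point, and Borel--Cantelli finishes; your Taylor expansion of the pressure (and the resulting values $\Lambda'(0)$ and $\Lambda''(0)=\alpha^2/((1+\alpha)^2(2+\alpha))$) checks out.

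The CLT part, however, has a genuine gap. You condition the whole argument on a quasi-power expansion $m_n(\lambda)=e^{n\Lambda(\lambda)}\bigl(A(\lambda)+o(1)\bigr)$ with an $O(1)$ prefactor, and you assert that this ``is precisely the content of the complex-analytic estimates'' in the paper. It is not: what those estimates (Proposition \ref{CL4.1} on the zero-free strip $|\Im z|<\pi$, plus the normal-family/Vitali argument of Lemma \ref{CL4.3}) deliver is only that $n^{-1}\log m_n(z)\to\Lambda(z)$ locally uniformly on a complex strip, together with all derivatives. That is an $o(n)$ control of $\log m_n - n\Lambda$, not the $O(1)$ control your prefactor requires. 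Worse, under the bare hypothesis $s_n/n\to\alpha$ the prefactor need not exist at all: taking, say, $s_n=\alpha n+\sqrt{n}$ produces corrections of order $n^{-1/2}$ in each step of \eqref{recursion_divided_through}, which accumulate to a $\sqrt{n}$-order term in $\log m_n(\lambda)$, so $A(\lambda)$ and the expansion $E[Z_n]=n\Lambda'(0)+A'(0)+o(1)$ can fail. The quasi-power route (Hwang/Flajolet--Odlyzko style) is available only in the special linear cases treated by singularity analysis in Section \ref{PT1}.

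The fix is that the prefactor is not needed, and the paper avoids it. It invokes Bryc (1993), Proposition 2: if $\sup_n m_n(\eps)^{1/n}<\infty$ for some $\eps>0$, the zero sets of $m_n$ stay uniformly away from $0$ in $\CC$, and $Z_n/n$ satisfies an LDP, then $(Z_n-E[Z_n])/\sqrt{n}\toD N(0,\Lambda''(0))$. Equivalently, and self-containedly: since $L_n=\log m_n$ is analytic on the strip and $n^{-1}L_n''\to\Lambda''$ uniformly near $0$ (Lemma \ref{CL4.3}), Taylor's theorem gives
\begin{equation*}
\log E\Bigl[e^{t(Z_n-E[Z_n])/\sqrt{n}}\Bigr]\ =\ L_n(t/\sqrt{n})-\tfrac{t}{\sqrt{n}}L_n'(0)\ =\ \tfrac{t^2}{2}\cdot\tfrac{1}{n}L_n''(\xi_n)\ \longrightarrow\ \tfrac{t^2}{2}\Lambda''(0),
\end{equation*}
with no $O(1)$ asymptotics for $E[Z_n]$ ever required, because you center by the exact mean $L_n'(0)$ rather than by $n\Lambda'(0)$. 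Your ``extracting the limit'' step already exploits this cancellation implicitly; once you center by $E[Z_n]$ itself, the second-order Taylor bound above is all you need, and the unproven (and in general false) prefactor hypothesis can be dropped.
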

\begin{remark}
\label{rmk2} \rm The conclusions of Theorem \ref{T4} also hold
when $s_n=\alpha n$ with $\alpha=1/2, 1$, and $k_0\leq s_1$ (cf. Subsection \ref{proof_of_T4}).\end{remark}

\subsection{Applications to random graph models}
\label{Models}

With respect to random graphs, the Markov chain $Z_n$, representing
the number of leaves, sits in the intersection of at least two
models, that is preferential attachment graphs with
linear-type weights, and uniformly and planar oriented trees.  Also, when $s_n = \alpha n$ for $\alpha =1/2$,
$Z_n$ represents the count of cherries in Yule trees.

\subsubsection{Preferential attachment graphs}  Preferential attachment
graphs have a long history dating back to Yule (cf. \cite{Mitzenmacher}).
  However, since the
work of Barabasi-Albert
\cite{Albert-Barabasi-02,Albert-Barabasi-99}, these graphs have been
of recent interest with respect to modeling of various ``real-world"
networks such as the internet (WWW), and social and biological
communities.  Leaves, or nodes with degree one,
 in these
networks of course represent sites with one link, or members at the
``fringe." [See books \cite{Durrett-06}, \cite{Chung-Lu},
\cite{Bonato} for more discussion.]

The idea is to start with an initial connected graph $G_1$ with a
finite number of vertices, and say no self-loops
(so that the vertices have well-defined degrees).
At step $1$, add
another vertex, and connect it
to a vertex $x$ of $G_1$ preferentially,
that is with probability proportional to its weight,
$f(d_x)/\sum_{y\in G_1}f(d_y)$, to form a new graph
$G_2$.
Continue in this way by adding a new vertex and connecting it
preferentially to form $G_k$ for $k\geq 1$. Here, the ``weight" of a
vertex is a function $f$  of its degree $d_x$.  When $f:\mathbb N \rightarrow
\R_+$ is increasing, already well connected vertices tend to become
better connected, a sort of reinforcing effect.

Our results are applicable to
linear weights, $f(k)= k+\beta$, for $\beta>-1$,
which correspond to certain ``power law" mean degree sequences.  Namely,
let $Z_k(n)$ be the number of vertices in $G_n$ with degree $k$ for
$k\geq 1$.  It was shown, by martingale arguments, in
\cite{BRST-01}, \cite{Mori-01}, and by embedding into branching
processes, in \cite{Rudas-Toth-Valko-07} that $Z_k(n)/n\rightarrow
r_k$ a.s. where
$$r_k \ =\
\frac{2+\beta}{k+\beta}\prod_{j=1}^{k}\frac{j+\beta}{j+2+2\beta} \
\sim\
1/k^{3+\beta}.$$ From the applications point of view, the
parameter $\beta$ can sometimes be matched to empirical network data
where similar power-law behavior is observed.

As the number of vertices with degree $1$, or the ``leaves" $Z_1(n)$,
increases by one in the next step when a non-leaf is selected, and remains
the same when a leaf is chosen, we see that $Z_1(n)$ corresponds to the
Markov chain $Z_n$ with $s_n$ as specified below.
Since at each step the total degree of the graph augments by two,
the probability at step $n$ that a vertex $x\in G_n$ is
selected is $(d_x+\beta)/(d_{G_1}+ 2(n-1)+(n-1+|G_1|)\beta)$. Then,
$s_n=\big(d_{G_1} +2(n-1)+(n-1+|G_1|)\beta\big)/(1+\beta)$ and $\alpha =
(2+\beta)/(1+\beta)$.  Here $d_{G_1}$ and $|G_1|$ are the total
degree and number of vertices in $G_1$ respectively.

One can also ``randomize" the model by adding a random number of
edges at each step:
  Let
  $\{\gamma_{i}\}$ be a sequence of independent identically distributed random
variables on $\mathbb N$ with
finite
mean,
$\bar{\gamma}=E[\gamma_1]<\infty$.
Then, at step $n$, we add a new vertex to the graph $G_n$ and connect it to
a node selected preferentially from $G_n$
with $\gamma_{n}$ edges put between them.
The effect of these random edge additions is to randomize further the weight given the nodes in the graph; the ``deterministic" model above is when $P(\gamma_1=1)\equiv 1$.
In \cite{athreya-2007},
by embedding into branching processes, it was shown, when
$E(\gamma_1\log\gamma_1)<\infty$, that $Z_k(n)/n
\stackrel{P}{\rightarrow} r_k$
where $r_k$ is given by an integral formula with asymptotics $r_k \sim
1/ k^{3+
  \beta/\bar{\gamma}}$.
See \cite{Cooper-Frieze} for an even more general
randomized model
where also a
LLN is proved.  Other models with different
random edge addition schemes are found in \cite{dereich-2008}, \cite{Mori-Zsolt}.

In this ``randomized" model, we now define the notion of ``generalized leaves" or ``buds,"
that is, those nodes which connect to exactly one other vertex,
albeit possibly with several edges linking them.
Leaves are those buds with exactly one edge connection.
Similar to the leaves in the deterministic setting, at step $n$, the bud count increases by one exactly when the new vertex,
a fortiori a bud,
connects to a non-bud in $G_n$,
and remains the same when the new vertex links to a bud in $G_n$.

Then, the Markov chain $Z_n$, representing the number of buds at step $n$, corresponds to
$s_1=(d_{G_1}+|G_1|\beta)/(1+\beta)$ and for $n\geq 2$, $$s_n \ =\ \frac{1}{1+\beta}\Big[d_{G_1}+
2\sum_{i=1}^{n-1}\gamma_{i} + \big(n-1+|G_1|\big)\beta\Big],$$
which satisfies assumption
\eqref{s_n}, by the
strong law of large numbers,
with $\alpha = (2\bar{\gamma}
+ \beta)/(1+\beta)$ a.s. In particular, by
our results, we obtain ``quenched" LLN, CLT and LDP's, with probability one with respect to $\{\gamma_{i}\}$,
for the buds in this scheme.

\subsubsection{Random recursive trees} Random recursive trees are
also well-established models, dating to the 1960's, with
applications to data sorting and searching, pyramid schemes, spread
of epidemics, chemical polymerization, family trees (stemma) of
copies of ancient manuscripts etc. Leaves in these trees correspond
to ``shutouts" with respect to pyramid schemes, nodes with small
``affinity" in polymerization models, ``terminal copies" in stemma of
manuscripts etc. See \cite{Mahmoud-Smythe}, \cite{Szymanski} and
references therein (e.g. \cite{Najock-Heyde} etc.) for more
discussion.  Below, we mention connections with Stirling
permutations.

Similar to preferential attachment, the recursive schemes form a sequence
of trees.  One starts with a single vertex labeled $0$, and then
adds a vertex at step $n\geq 1$, labeled $n$, to one of the $n$
nodes already present. When the choice is made uniformly, the model
forms a {\it uniformly} recursive tree. However, when the choice is
made proportional to the degree, a non-uniform or {\it plane
oriented} tree is grown. The interpretation is that in uniform
trees, at each distance from the root $0$, there is no ordering of
the labels. However, in plane-oriented trees, different orders give
rise to distinct trees; that is, after selecting a parent node at
random at step $n$ with $k$ children with a certain cyclic order of
labels, there are $k+1$ locations where the new label $n$ can be
inserted.

Plane oriented trees are similar to preferential attachment graphs
with $\beta =0$ (cf. Chapter 4 \cite{Durrett-06}), and the number of
leaves corresponds to the Markov chain with $k_0=1$, $s_n = 2n-1$
for $n\geq 1$, and $\alpha =2$.  On the other hand, the number of
leaves in uniformly recursive trees corresponds to the case $k_0=1$,
$s_n = n$ for $n\geq 1$, and $\alpha =1$. With respect to both types
of recursive trees, LLN's and CLT's have also been proved by
combinatorial, urn and martingale methods (see
\cite{Mahmoud-Smythe}, \cite{Janson04}). So, in this context, our
results give alternate proofs of the LLN and CLT for these recursive trees, and also an LDP for $Z_n/n$.
We note also Theorem \ref{T2} yields a path LDP for planar
oriented trees.

We comment now on recent connections of planar oriented trees with
Stirling permutations (cf. \cite{janson-2008},
\cite{janson-kuba-panholzer-2008}). A Stirling permutation of length
$2n$ is a permutation of the multiset $\{1,1,2,2,\ldots, n,n\}$ such
that for each $i\leq n$ the elements occurring between the two $i$'s
are larger than $i$ (cf. \cite{Gessel-Stanley}).  It turns out that
each permutation is a distinct code for a plane oriented recursive
tree with $n+1$ vertices.

Quoting from \cite{janson-2008}, consider the depth first walk which
starts at the root, and goes first to the leftmost daughter of the
root, explores that branch (recursively, using the same rules),
returns to the root, and continues to the next daughter and so on.
Each edge is passed twice in the walk, once in each direction. Label
the edges in the tree according to the order in which they were
added--edge $j$ is added at step $j$ and connects vertex $j$ to an
previously labeled vertex. The plane recursive tree is coded by the sequence of
labels passed by the depth first walk.  With respect to a tree with
$n+1$ vertices, the code is of length $2n$, where each of the labels
$1,2, \ldots, n$ appears twice.  Adding a new vertex means inserting
the pair $(n+1)(n+1)$ in the code in one of the $2n+1$ places.

In a Stirling permutation $a_1a_2\cdots a_{2n}$, the index $0\leq
i\leq 2n$ is a plateau if $a_i=a_{i+1}$ (where $a_{2n+1}=0$).
\cite{janson-2008} shows that the number of leaves in a plane
oriented tree with $n+1$ vertices is the number of plateaux in a
random Stirling permutation. See \cite{janson-2008} for more
details.

\subsubsection{Yule trees}
Since Yule's influential 1924 paper \cite{Yule-1924}, Yule trees, among other processes, have been used widely to model phylogenetic
evolutionary relationships between species (see \cite{Aldous-2001} for an interesting essay).  In particular, the counts of various shapes and features of these trees can be studied, and matched to empirical data to test evolutionary hypotheses.  In \cite{McKenzie-Steel-00}, a LLN and CLT is proved for the number of cherries, or pairs of leaves with a common parent, in Yule trees.  Associated confidence intervals are computed, and some empirical data sets are examined to see their compatibility with ``Yule tree" genealogies.  Other related statistical tests and limit results can be found in
\cite{Blum-Francois-2005,Blum-Francois-Janson,Rosenberg-2006}.

In the Yule tree process, one starts with a root vertex.  It will split into two daughter nodes at step $2$, each of which is equally likely to split into two children at step $3$.  At step $n$, one of the $n$ leaves in the tree is chosen at random, and it then splits into two daughters, and so on.  It is easy to see that the number of cherries at step $n$ is given by the Markov chain $Z_n$ with $s_n = n/2$ and $k_0 = 0$.  Correspondingly, our results give different proofs of the LLN and CLT for the cherry counts, as well as an LDP for $Z_n/n$.

\section{Proof of Theorem \ref{T3}: Cases $s_n = \alpha n$, $\alpha =1/2,1,2$}
\label{PT1}

In this section we give a
proof of the LDP in Theorem
\ref{T3}, via an analysis of a generating function, for the
special case $s_n=\alpha n$ for $\alpha=2$,
at the
intersection of several models, and two additional values $\alpha=1/2, 1$ not covered
by Theorem \ref{T2}.

The LDP is obtained by G\"artner-Ellis theorem \cite[Theorem 2.3.6]{Dembo-Zeitouni-98}
 from the following.
\begin{proposition}\label{P 1D} Suppose $s_n=\alpha n$  with $\alpha=1/2$, $1$, or $2$,
and
 $Z_1=k_0\leq s_1$. The limit \eqref{1D Pressure} exists, and is given by the smooth function
 \begin{eqnarray}
  \La(\la)& =&  \left\{\begin{array}{ll}
\log \left(\frac{\sqrt{e^{\lambda }-1}}{\arctan
  \left(\sqrt{e^{\lambda }-1}\right)}\right)& \mbox{\rm  if } \la>0  \\
   \log \left(\frac{ \sqrt{1-e^{\lambda }}}{{\rm arctanh}\left(\sqrt{1-e^{\lambda
   }}\right)}\right) & \mbox{\rm  if }\la<0\ \ \ \ \ \ \ {\rm for \ }\alpha = 1/2\end{array}\right.\nonumber\\
      &=&\ \ \ \ \log \frac{e^\la-1}{\la}\ \ \ \ \ \ \ \ \ \ \ \ \ \ \ \ \ \ \ \ \ \ \ \ \ \ \ \ \ \ \ \ \,  \ {\rm for \ }\alpha =1\nonumber\\
    &=&\ \ \ \ \log\left(\frac{\left( e^{\lambda } -1 \right)^2}{2\,\left( e^{\lambda } -1  - \lambda  \right) }\right)
   \ \ \ \ \  \ \ \ \ \ \ \ \ \ \ \ \ \ \  {\rm for \ }\alpha =2
 \label{R-def}\end{eqnarray} when $\lambda \neq 0$, and $\Lambda(0)=0$.
\end{proposition}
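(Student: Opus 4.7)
The plan is to exploit the linear first-order PDE \eqref{generating_function_pde} satisfied by the generating function $G(\lambda, z) = \sum_{n\geq 1} m_n(\lambda) z^{n-1}$, using the method of characteristics. The system
$$\frac{dz}{1-e^\lambda z} \;=\; \frac{\alpha\,d\lambda}{e^\lambda - 1} \;=\; \frac{dG}{e^\lambda G}$$
admits two first integrals: pairing the last two equations shows that $G/(e^\lambda-1)^\alpha$ is conserved, while the reduced linear ODE $z'(\lambda) = \alpha(1-e^\lambda z)/(e^\lambda-1)$ has integrating factor $(e^\lambda-1)^\alpha$ and gives the invariant $v := (e^\lambda - 1)^\alpha z - \phi(\lambda)$, where $\phi(\lambda) := \alpha\int_0^\lambda (e^s-1)^{\alpha-1}\, ds$. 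Thus the general solution is $G(\lambda,z) = (e^\lambda-1)^\alpha F(v)$, and the initial data $G(\lambda,0) = m_1(\lambda) = e^{\lambda k_0}$ pins $F$ down through $F(-\phi(\lambda_0)) = e^{\lambda_0 k_0}/(e^{\lambda_0}-1)^\alpha$.

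Next I would locate the positive $z$-singularity of $G$ at fixed $\lambda > 0$. Since $v$ is affine in $z$, the first singularity occurs where $v$ enters a singularity of $F$. Using $\phi(\lambda_0) \sim \lambda_0^\alpha$ and $(e^{\lambda_0}-1)^\alpha \sim \lambda_0^\alpha$ as $\lambda_0 \to 0^+$, one checks that $F(u) \sim -1/u$ as $u \to 0^-$, so $F$ has a simple pole at $0$, and hence $G$ has a simple pole at
$$z^*(\lambda) \;=\; \frac{\phi(\lambda)}{(e^\lambda-1)^\alpha} \;=\; \frac{\alpha}{(e^\lambda-1)^\alpha}\int_0^\lambda (e^s-1)^{\alpha-1}\, ds,$$
which is independent of $k_0$. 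Specializing $\phi$: for $\alpha=1$, $\phi(\lambda) = \lambda$; for $\alpha=2$, $\phi(\lambda) = e^\lambda - 1 - \lambda$; for $\alpha=1/2$, the substitution $u=\sqrt{e^s-1}$ gives $\phi(\lambda) = \arctan\sqrt{e^\lambda-1}$, continued to $\operatorname{arctanh}\sqrt{1-e^\lambda}$ for $\lambda<0$. Inserting these into $-\log z^*(\lambda)$ recovers exactly the three closed forms in \eqref{R-def}.

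The main obstacle is upgrading the Cauchy--Hadamard $\limsup$ to a genuine limit. By Pringsheim, the radius of convergence $R(\lambda)$ of the nonnegative-coefficient series $G(\lambda,\cdot)$ is itself a singular point, and analyticity of $G$ for $|z| < z^*(\lambda)$ from the characteristic representation forces $R(\lambda) = z^*(\lambda)$, giving $\limsup (1/n)\log m_n(\lambda) = -\log z^*(\lambda)$. To promote this to an honest limit I would verify that $F$, and hence $G(\lambda,\cdot)$, extends meromorphically past $z^*(\lambda)$ with only the simple pole there. This is explicit in each of the three cases: for $\alpha=1$ the inversion yields $F(u) = -e^{-uk_0}/(1-e^{-u})$, meromorphic on $\mathbb C$ with next poles in $2\pi i \mathbb Z\setminus\{0\}$; for $\alpha=2$ the relation $e^{\lambda_0}-\lambda_0 = 1+u$ furnishes an analytic inverse $\lambda_0(u)$ in a neighborhood of $0$; and for $\alpha=1/2$ a suitable branch choice of the square root produces a meromorphic extension in a disk $|u| < \delta$. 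Residue extraction then delivers $m_n(\lambda) = c(\lambda)\, z^*(\lambda)^{-n} + O((z^*(\lambda)+\delta)^{-n})$ with $c(\lambda)>0$, so $(1/n)\log m_n(\lambda) \to -\log z^*(\lambda) =: \Lambda(\lambda)$ as claimed. The explicit formula makes $\Lambda$ smooth and strictly convex, and G\"artner--Ellis \cite[Thm.~2.3.6]{Dembo-Zeitouni-98} then yields the LDP of Theorem \ref{T3}.
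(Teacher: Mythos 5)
Your overall strategy coincides with the paper's: solve \eqref{generating_function_pde} by characteristics, locate the dominant $z$-singularity of $G(\lambda,\cdot)$, and read off the pressure as $-\log z^*(\lambda)$. Your unified derivation of the invariant $v=(e^\lambda-1)^\alpha z-\phi(\lambda)$ with $\phi(\lambda)=\alpha\int_0^\lambda(e^s-1)^{\alpha-1}\,ds$ is correct and cleanly reproduces all three closed forms at once, and for $\alpha=1$ and $\alpha=1/2$ your explicit inversions of $F$ (an exponential-type function and a cotangent, respectively) really are meromorphic in $z$, so a Darboux/residue argument closes those cases exactly as in the paper.

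The gap is in the case $\alpha=2$, which is the technical heart of the proposition. You assert that the relation defining $F$ "furnishes an analytic inverse $\lambda_0(u)$ in a neighborhood of $0$." It does not: $\lambda_0=0$ is a critical point of the map $\lambda_0\mapsto 2(\lambda_0-e^{\lambda_0})$ (its derivative $2(1-e^{\lambda_0})$ vanishes there), so the map is two-to-one near $0$ with $u\asymp -\lambda_0^2$ near the critical value, and the inverse is a two-valued function with a square-root branch point. Consequently $F$ splits into two distinct branches (one reached from $\lambda>0$, one from $\lambda<0$ --- the paper's $\varphi_\pm=e^{k_0h_\pm}/(e^{h_\pm}-1)^2$), each analytic only on a \emph{slit} neighborhood of the singular point, not on a punctured disk; near the singularity one has $-1/u$ plus genuine $u^{-1/2}$-type terms. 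Hence there is no Laurent expansion, no meromorphic continuation past $z^*(\lambda)$, and no "residue extraction." What is actually required --- and what the paper devotes Lemmas \ref{LCC} and \ref{L2} and the conformal-mapping construction of $h_\pm$ to --- is (i) proving that each branch continues analytically to a slit disk around the singularity large enough to contain a Flajolet--Odlyzko indented domain, together with the asymptotic $\varphi_\pm(w)\approx -1/(w+2)$, and (ii) invoking the transfer theorem \cite[Corollary 2]{Flajolet-Odlyzko-90} to convert this into $m_n(\lambda)\sim c(\lambda)z^*(\lambda)^{-n}$. Without that continuation argument your upgrade from $\limsup$ to $\lim$ does not go through for $\alpha=2$.
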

We  follow analytic arguments adapted
 from \cite{Flajolet-Gabarro-Pekari-05}.
Since $0\leq k_0\leq Z_n\leq n+k_0-1$,  we get
$0< m_n(\la)\leq
e^{(n+k_0)\la^+}$ with $\la^+=\max\{\la,0\}$. Therefore,   for all
complex $z$ with $|z|<e^{-\la^+}$, $G(z,\la)$ is well defined and
satisfies \eqref{generating_function_pde} with the initial condition
$G(0,\la)=e^{k_0\la}$. The coefficients of this PDE do not vanish in
the regions 
$$\mathcal{D}_+=\big\{(z,\la): \la>0, |z|<e^{-\la}\big\}, \; \mathcal{D}_-=\big\{(z,\la): \la<0, |z|<1\big\}.$$
For $\la\ne 0$, the PDE can be solved by the method of characteristics.
Clearly, $m_n(\la)$ are the coefficients in the Taylor expansion  at $z=0$ of the solution, and $e^{-\La(\la)}$ is the radius of convergence of the series that can be determined by singularity analysis.

For $\alpha=1$, $k_0=0,1$;
using initial condition $G(0,\la)=e^{k_0\la}$ we get
$$G(\lambda, z) \ =\ e^{k_0(z(e^\lambda -1)-\lambda)}\frac{e^\lambda -1} {1-\exp(z(e^\lambda -1)-\lambda)}.$$
Hence, the singularity of $G$ as a function of $z$ nearest to the
origin is a simple pole at
$$z_0\ =\ \frac{\lambda}{e^\lambda-1}.$$
By Darboux's asymptotic method  \cite[Ch. 8]{Olver-74},
$$\frac1n\log E(\exp (\la Z_n))\ \to\  \log \frac{e^\lambda-1}{\lambda}$$
and the LDP follows.

Next, consider $\alpha=1/2$. In this case,
the solutions of the PDE   differ  depending on the region $\mathcal{D}_\pm$,
but  are explicit so there are no difficulties in constructing
 their analytic extensions. Using
the  initial condition $k_0=0$, we have  $G(0,\la)=1$, and the solution of
\eqref{generating_function_pde} is
$$
G(\lambda, z) \ =\
\begin{cases}
\displaystyle\frac{\sqrt{e^\la-1}}{\tan\big(\arctan\sqrt{e^\la-1}-z \sqrt{e^\la-1}\big) }& \la>0 \\
\displaystyle\frac{\sqrt{1-e^\la}}{\tanh\big({\rm arctanh}\sqrt{1-e^\la}-z \sqrt{1-e^\la}\big)}& \la<0
\end{cases}
$$
Hence, the singularity of $G$ as a function of $z$ nearest to the
origin is a simple pole at
$$z_0\ =\ \begin{cases}
\displaystyle\frac{\arctan\sqrt{e^\la-1}}{\sqrt{e^\la-1}} & \la>0\\
\displaystyle\frac{{\rm arctanh}\sqrt{1-e^\la}}{\sqrt{1-e^\la}} &\la<0
\end{cases}$$
Once again, by Darboux's asymptotic method  \cite[Ch. 8]{Olver-74},
$$\frac1n\log E(\exp (\la Z_n))\ \to\  \log 1/z_0$$
and the LDP follows.


For $\alpha=2$, the method of characteristics gives the following answer.
\begin{lemma}\label{LC1}
Suppose $\varphi$ is a function of one complex variable, analytic in
a domain $\mathcal{D}$ containing $(-\infty,-2)$. Then
 \begin{equation}\label{tilde G}
   {G}(z,\la)\ :=\
     (e^\la-1)^2\varphi(z (e^\la -1)^2+ 2\la - 2 e^\la).
      \end{equation}
      satisfies the PDE
      \eqref{generating_function_pde}
      for all $\la\ne 0$ and $|z|$ small enough.
      Furthermore, the initial condition is fulfilled at  $\la\ne 0$ if
 \begin{equation}  \label{PDE-INI}
 \varphi(2\la-2e^\la)\ =\ \frac{e^{k_0\la}}{(1-e^\la)^2}.
 \end{equation}
\end{lemma}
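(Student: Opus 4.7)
The plan is to verify the stated ansatz by direct substitution into the PDE \eqref{generating_function_pde} with $\alpha=2$, and then to check the initial condition and the analyticity constraint. Since no existence/uniqueness assertion is made, the lemma is essentially a computational verification; the only nontrivial point is ensuring that the argument of $\varphi$ remains inside its domain of analyticity.

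First I would introduce the shorthand $h(\lambda):=e^\lambda-1$ and $u(z,\lambda):=z h(\lambda)^2+2\lambda-2e^\lambda$, so that $G(z,\lambda)=h(\lambda)^2\varphi(u)$. Differentiating gives $G_z=h^4\varphi'(u)$, and, using $u_\lambda=2h(ze^\lambda-1)$,
$$G_\lambda\ =\ 2h\,e^\lambda\,\varphi(u)+2h^3(ze^\lambda-1)\varphi'(u).$$
Substituting into the PDE $(1-e^\lambda z)G_z+\tfrac{h}{2}G_\lambda=e^\lambda G$ (taking $\alpha=2$ in \eqref{generating_function_pde}), the $\varphi'(u)$-terms combine to
$$h^4\bigl[(1-e^\lambda z)+(ze^\lambda-1)\bigr]\varphi'(u)\ =\ 0,$$
and the $\varphi(u)$-terms give $h^2 e^\lambda\varphi(u)=e^\lambda G$. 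Hence the PDE holds identically, for any differentiable $\varphi$, wherever the composition is defined.

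For the initial condition, setting $z=0$ yields $G(0,\lambda)=(e^\lambda-1)^2\varphi(2\lambda-2e^\lambda)$, which matches $e^{k_0\lambda}$ precisely when $\varphi(2\lambda-2e^\lambda)=e^{k_0\lambda}/(1-e^\lambda)^2$, i.e.\ \eqref{PDE-INI}.

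The only point requiring care is to justify that $u(z,\lambda)$ lies in the domain $\mathcal D$ of $\varphi$ for $\lambda\neq 0$ and $|z|$ small. For real $\lambda$ the function $\lambda\mapsto 2\lambda-2e^\lambda$ achieves its maximum value $-2$ at $\lambda=0$ and is strictly less than $-2$ otherwise; thus $u(0,\lambda)\in(-\infty,-2)\subset\mathcal D$ whenever $\lambda\neq 0$. By continuity of $u$ in $z$, there is a neighborhood of $z=0$ on which $u(z,\lambda)$ stays inside $\mathcal D$, so $\varphi(u(z,\lambda))$ and its derivative are well defined there and the computation above is valid. I do not expect any serious obstacle in this proof; the only mild subtlety is the domain check just described, and tracking the cancellation in the $\varphi'(u)$ coefficient—which is precisely the characteristic relation that led to the ansatz \eqref{tilde G} in the first place.
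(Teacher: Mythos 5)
Your proposal is correct and follows essentially the same route as the paper: direct computation of $G_z$ and $G_\lambda$, substitution into \eqref{generating_function_pde} with $\alpha=2$ to see the $\varphi'$ terms cancel, and reading off \eqref{PDE-INI} from $z=0$. The domain check you add (that $2\lambda-2e^\lambda<-2$ for $\lambda\neq 0$, hence $u(z,\lambda)\in\mathcal D$ for small $|z|$) is a worthwhile explicit justification of the "$|z|$ small enough" clause that the paper leaves implicit.
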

\begin{proof}
The verification of the initial condition is trivial, and the verification of the PDE is a straightforward calculation.
 Denoting for conciseness
$\psi(z,\la)=\varphi(z (e^\la -1)^2+ 2\la - 2 e^\la)$,
$\psi^{(1)}(z,\la)=\varphi'(z (e^\la -1)^2+ 2\la - 2 e^\la)$
we verify, for $\la\ne 0$ and $z$ such that $z (e^\la -1)^2+ 2\la - 2 e^\la\in\mathcal{D}$, that
$$
\frac{\partial {G}(z,\la)}{\partial \la}
\ =\ 2 (e^\la -1)e^\la \psi(z,\la)+2(e^\la -1)^3 (ze^\la -1)\psi^{(1)}(z,\la)
$$
and $$
  \frac{\partial {G}(z,\la)}{\partial z}\ =\ (e^\la -1)^4 \psi^{(1)}(z,\la).
$$
Equation \eqref{generating_function_pde} now  follows by a calculation.
\end{proof}

Our next goal is to show that one can find a solution of \eqref{generating_function_pde} which can be analytically
extended in variable $z$ to a large enough domain. To this end, we need to analyze
\eqref{PDE-INI}   with  complex argument. The basic plan consists of noting that
function $f(x)=2(x-e^x)$ is analytic, strictly decreasing for $x>0$, strictly increasing for
$x<0$, and $f(0)=-2$.
 The derivative $f'(x)$  vanishes only at $x= 0$, so $f\left|_{[0,\infty)}\right.$ and $f\left|_{(-\infty,0]}\right.$ have   continuous inverses
$h_+: (-\infty,-2]\to[0,\infty)$ and $h_-:(-\infty,-2]\to(-\infty,0]$, and both are analytic on
$(-\infty,-2)$.

Clearly,  if we define
\begin{equation}\label{h2phi}
\varphi_\pm(\la)\ =\ \frac{e^{k_0h_\pm(\la)}}{(e^{h_\pm(\la)}-1)^2}\;,
\end{equation}
then $\varphi_+$ satisfies \eqref{PDE-INI} for $\la>0$, and $\varphi_-$ satisfies  \eqref{PDE-INI} for $\la<0$.
The goal is therefore to find the appropriate analytic extensions
of the functions
$h_\pm$.

\subsection{Construction of an analytic extension}

We need to analyze $f(z) = 2(z - e^z)$. The closely related function $z+e^z$ appears in  \cite[page 116]{Kober} but proofs are not included there; we give details for $f(z) = 2(z - e^z)$ for completeness.

The proof relies on the following univalence criterion. 
\begin{lemma}[Wolff-Warschawski-Nishiro]
\label{Warschawski}
If  $g$ is holomorphic in a convex region $\Omega$ and
$g'(\Omega)\subset H$, a half-plane with $0\in\partial H$, then $g$ is one-to-one on $\Omega$.
\end{lemma}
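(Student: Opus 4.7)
The plan is to leverage the convexity of $\Omega$ through the fundamental-theorem-of-calculus identity
\[
g(z_2)-g(z_1) \;=\; (z_2-z_1)\int_0^1 g'\bigl((1-t)z_1+tz_2\bigr)\,dt,
\]
which is valid whenever the segment joining $z_1$ and $z_2$ lies in $\Omega$. Convexity guarantees this for all $z_1,z_2\in\Omega$, so proving that $g$ is one-to-one reduces to showing that the integral on the right is nonzero whenever $z_1\neq z_2$.

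To carry this out, I would use the half-plane hypothesis. Since $0\in\partial H$, there is a unit complex number $e^{i\theta}$ such that $e^{i\theta}H$ equals the right half-plane $\{w:\mathrm{Re}(w)>0\}$. Then $\mathrm{Re}\bigl(e^{i\theta}g'(z)\bigr)>0$ for every $z\in\Omega$, and integrating along the segment $\gamma(t)=(1-t)z_1+tz_2$ yields
\[
\mathrm{Re}\!\left(e^{i\theta}\int_0^1 g'(\gamma(t))\,dt\right) \;=\; \int_0^1 \mathrm{Re}\bigl(e^{i\theta}g'(\gamma(t))\bigr)\,dt \;>\; 0.
\]
In particular the integral is nonzero, so $g(z_2)\neq g(z_1)$, proving injectivity.

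Because the argument is essentially one line, no serious obstacle is anticipated. The only point that deserves a moment's thought is the borderline case in which $H$ is interpreted as a closed half-plane and $g'$ might conceivably take values on $\partial H$: this is harmless, since by the open mapping theorem $g'(\Omega)$ is either a single point---in which case $g$ is affine and the conclusion is immediate---or an open set, and an open subset of a closed half-plane cannot contain a point of its bounding line, so $g'(\Omega)$ in fact lies in the open half-plane and the strict inequality above is restored.
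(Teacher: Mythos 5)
Your proof is correct and is essentially the paper's: the paper simply rotates by $e^{i\theta}$ and cites Duren's Theorem 2.16 (the Noshiro--Warschawski theorem), whose proof is exactly your integral-along-the-segment argument. The only quibble is your closing remark about a closed half-plane: if $g'$ were identically the constant $0\in\partial H$, then $g$ would be constant rather than injective, so that degenerate reading must be excluded --- as it is under the intended reading of $H$ as an open half-plane, where $g'$ never vanishes and your strict inequality holds outright.
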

\begin{proof} This is  \cite[Theorem 2.16 on page 47]{Duren-83}  applied to the function $e^{i\theta}g(z)$ with  a real constant $\theta$ chosen appropriately  to rotate $H$.
\end{proof}
\begin{lemma}\label{LCC}
$f$ is a one-to-one mapping of the half-closed strip $\Sigma = \{z: 0< \Im (z) \leq \pi\}$ onto a slit closed half-plane: $\{w:\Im (w)\leq \pi\}\setminus (-\infty,-2]$. The boundary correspondence is as follows: $f$ maps $\RR + \pi i$ injectively onto $\RR + 4\pi i$, and $f$ is one-to-one on both $[0,+\infty)$ and $(-\infty,0]$ and maps each onto $(-\infty,-2]$ (cf. Fig. \ref{Fig1}).
\end{lemma}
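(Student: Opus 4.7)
\medskip

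\noindent\textbf{Proof plan.} The strategy is to combine the univalence criterion of Lemma \ref{Warschawski} with an explicit analysis of the boundary correspondence under $f$.

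\emph{Univalence.} Compute $f'(z)=2(1-e^z)$, which is nonzero throughout $\Sigma$ since $e^z\neq 1$ for $0<\Im z\leq \pi$, so $f$ is locally univalent on $\Sigma$. For $z=x+iy\in\Sigma$,
\begin{equation*}
\Im f'(z)\ =\ -2e^{x}\sin y\ \leq\ 0,
\end{equation*}
with equality precisely on the line $\Im z=\pi$. Hence $f'(\Sigma)$ is contained in the closed lower half-plane $H=\{w:\Im w\leq 0\}$, and $0\in\partial H$. Since $\Sigma$ is convex, Lemma \ref{Warschawski} (applied to a suitable rotation $e^{i\theta}f$) yields that $f$ is one-to-one on $\Sigma$.

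\emph{Boundary images.} I compute $f$ on each piece of $\partial\Sigma$ separately. On the real line, $f(x)=2(x-e^x)$ with $f(0)=-2$; the derivative $f'(x)=2(1-e^x)$ has a single zero at $x=0$ and $f(x)\to -\infty$ as $|x|\to\infty$, so $f$ is strictly increasing on $(-\infty,0]$ and strictly decreasing on $[0,\infty)$, each of the two closed half-rays mapping bijectively onto $(-\infty,-2]$. On the upper boundary $z=x+i\pi$,
\begin{equation*}
f(x+i\pi)\ =\ 2(x+e^{x})+2\pi i,
\end{equation*}
and the real function $x\mapsto x+e^x$ is a strictly increasing bijection of $\RR$ onto $\RR$; thus $f$ maps $\RR+i\pi$ injectively onto a horizontal line at the stated height, in agreement with the claimed boundary correspondence. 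In particular, $f\big|_{\Im z=\pi}$ is injective and disjoint from $f\big|_{\Im z=0}\subset(-\infty,-2]$.

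\emph{Identification of the image.} It remains to show that the image of $\Sigma$ is exactly the slit closed half-plane stated. I would exhaust $\Sigma$ by the rectangles $\Sigma_N=\Sigma\cap\{|\Re z|<N\}$ and trace the image of $\partial\Sigma_N$. Univalence makes $f(\partial\Sigma_N)$ a simple closed curve bounding $f(\Sigma_N)$, whose pieces are: the two monotone arcs on the real line (both landing in $(-\infty,-2]$), the image of the upper edge (an initial segment of the horizontal boundary line), and the two vertical sides, on which $|f|\to\infty$ because $|e^z|\to\infty$ as $\Re z\to+\infty$ while $|z|\to\infty$ as $\Re z\to-\infty$. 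Letting $N\to\infty$, the vertical sides escape to infinity in the target, and the boundary of $f(\Sigma)$ becomes the union of the horizontal line at height $\pi$ together with the slit $(-\infty,-2]$ traced twice; by the argument principle and simple connectivity, $f(\Sigma)$ is forced to be the claimed slit half-plane.

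The main obstacle is the last step: one has to verify that the vertical-side images genuinely escape every compact subset of the target and do not leave gaps, and that the doubly-traced slit really corresponds to excising $(-\infty,-2]$ from the closed half-plane. Once that is handled, the boundary correspondence summary in the lemma follows directly from the monotonicity computations above.
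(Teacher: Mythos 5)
Your route is genuinely different from the paper's: the paper substitutes $\zeta=e^z$, writes $f(z)=g(\zeta)$ with $g(\zeta)=2(\log\zeta-\zeta)$, and applies the univalence criterion to $g$ on the \emph{open} upper half-plane, where $\Im g'(\zeta)=-2\Im\zeta/|\zeta|^2<0$ strictly; the half-open edge of $\Sigma$ becomes the ray $(-\infty,0)$, which is then handled by the explicit formula for $g$ there. You apply the criterion directly to $f$ on $\Sigma$, which is more economical but is exactly where your argument has a soft spot: $\Sigma$ is not an open region, and $\Im f'(z)=-2e^x\sin y$ \emph{vanishes} on the top edge, so $f'(\Sigma)$ lies only in the closed lower half-plane, whereas Lemma \ref{Warschawski} (Duren's Theorem 2.16) needs the derivative range to stay off the boundary line of $H$. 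You need the standard refinement: for $z_1\neq z_2$ in $\Sigma$ whose segment is not contained in $\RR+\pi i$, the segment meets $\{\Im z=\pi\}$ in at most one point, so $\Im\int_0^1 f'(z_1+t(z_2-z_1))\,dt<0$ and hence $f(z_1)\neq f(z_2)$; if the segment lies in $\RR+\pi i$, then $f'=2(1+e^x)>0$ there and $f$ is strictly monotone. With that sentence added, your univalence claim is sound. (Incidentally, your computation $f(x+i\pi)=2(x+e^x)+2\pi i$ is correct: the top edge goes onto $\RR+2\pi i$ and the image is $\{w:\Im w\leq 2\pi\}\setminus(-\infty,-2]$; the ``$\pi$'' and ``$4\pi i$'' in the lemma statement are misprints, as the rest of the paper confirms, so do not bend your computation to match them.)

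The remaining issue is the one you flagged yourself: identifying the image. Your exhaustion-by-rectangles and winding-number plan is workable, and is essentially what the paper leaves implicit when it simply asserts the image of $\HH$ under $g$. Two details to nail down: (i) the bottom edge of $\partial\Sigma_N$ is traced up to $-2$ and back, so $f(\partial\Sigma_N)$ is not a simple closed curve as you claim; the argument-principle count still works for points off the curve because the doubly-traced slit contributes zero winding; but (ii) points \emph{on} $(-\infty,-2]$ lie on that curve, so the winding number is silent about them, and you must separately rule out $f(z_0)\in(-\infty,-2]$ for $z_0\in\Sigma$. This follows from the same derivative estimate applied to the segment from $z_0$ to a real point (its interior lies where $\Im f'<0$), i.e., from injectivity of $f$ on the closed strip off the real axis. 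Once these points are supplied, your proof is complete and at least as self-contained as the paper's.
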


\begin{proof}
We write $f(z)=g(\zeta)$, where  $\zeta=e^z$, $g(\zeta)=2(\log\zeta -\zeta)$ and $\log$ denotes the principal branch of the logarithm.  The function $\zeta=e^z$ is a one-to-one mapping on the strip $\Sigma$. The image of the interior of $\Sigma$ is the upper half-plane $\HH = \{\zeta: \Im(\zeta)>0\}$. Furthermore, the upper edge $\RR + \pi i$ of $\Sigma$ is mapped onto $(-\infty,0)$ and the bottom edge $\RR$ is carried onto $(0,+\infty)$.

The derivative of $g(\zeta)$ is 
$g'(\zeta)=2\bar{\zeta}/|\zeta|^2-2$.
In particular, 
$\Im(g'(\zeta))=-2\Im (\zeta)/|\zeta|^2<0$ for $\zeta \in \HH$. By
Lemma \ref{Warschawski} with $\Omega=\HH$,
$g$ is one-to-one on the half-plane $\HH$.  Under $g$ the image of $\HH$ is
$\{w:\Im (w)\leq 2\pi\}\setminus(-\infty, -2]$, and on the boundary we have $g(\RR^-)=\RR+2\pi i$ and $g(\RR^+)$ is the slit $(-\infty,-2]$ twice covered with $g(1)=-2$. Perhaps this latter statement is easier seen directly  from $f$;  the slit is covered twice: $f((-\infty,0])=f([0,+\infty))=(-\infty,-2]$.
\end{proof}

\begin{figure}[hbt]\begin{center}
\includegraphics[height=3in]{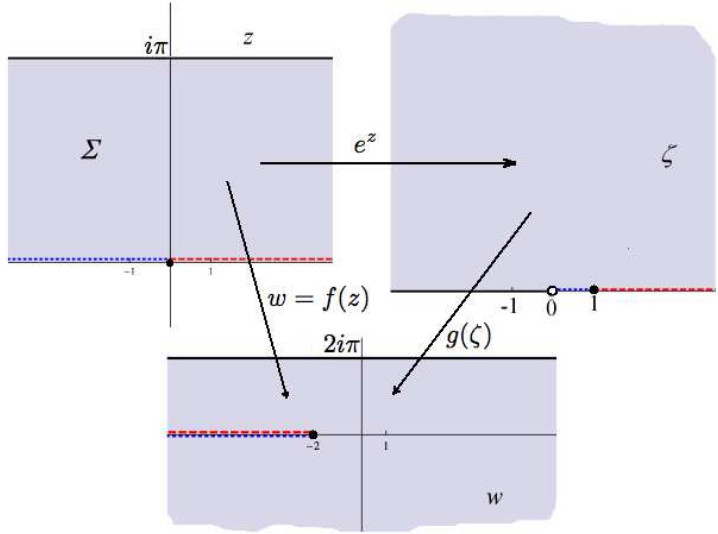}
\end{center}
\caption{\label{Fig1} Composition of  maps $f(z)=g(e^z)$ for the proof of Lemma \ref{LCC}.}
\end{figure}






We investigate the conformal mapping $f|\Sigma$ in more detail. The preimage of  $[-2,\infty)$ under $f|\Sigma$ is the curve $\gamma$ given by $\Re z=\log(\Im z/\sin \Im z)$, $0\leq \Im z < \pi$. This curve begins at the origin and becomes asymptotic to $\RR+ \pi i$ in the positive direction. By removing the curve $\gamma$, $\Sigma$ is cut into two parts, $\Sigma_-$ and $\Sigma_+$. $\Sigma_-$ is bounded by $\RR^-$, $\gamma$ and $\RR+\pi i$ with the latter line part of $\Sigma_-$. The region $\Sigma_+$ is bounded by $\gamma$ and $\RR^+$. Then $f|\Sigma_-$ is a conformal mapping of $\Sigma_-$ onto the half-closed strip $S = \{w: 0 < \Im w \leq 2\pi\}$ with $f(\RR + \pi i) = \RR +2\pi i$, $f(\RR^-) = (-\infty,-2)$ and $f(\gamma) = [-2,+\infty)$. Similarly, $f|\Sigma_+$ is a conformal mapping of $\Sigma_+$ onto the lower half-plane $\overline{\HH} = \{\bar z: z \in \HH \}$, where $\bar z$ denotes the complex conjugate of $z$, with $f(\RR^+) = (-\infty,-2)$ and $f(\gamma) = [-2,+\infty)$. (See Fig \ref{Fig3}.)
\begin{figure}[hbt]
\begin{center}
\begin{tabular}{c}
\includegraphics[height=1.5in]{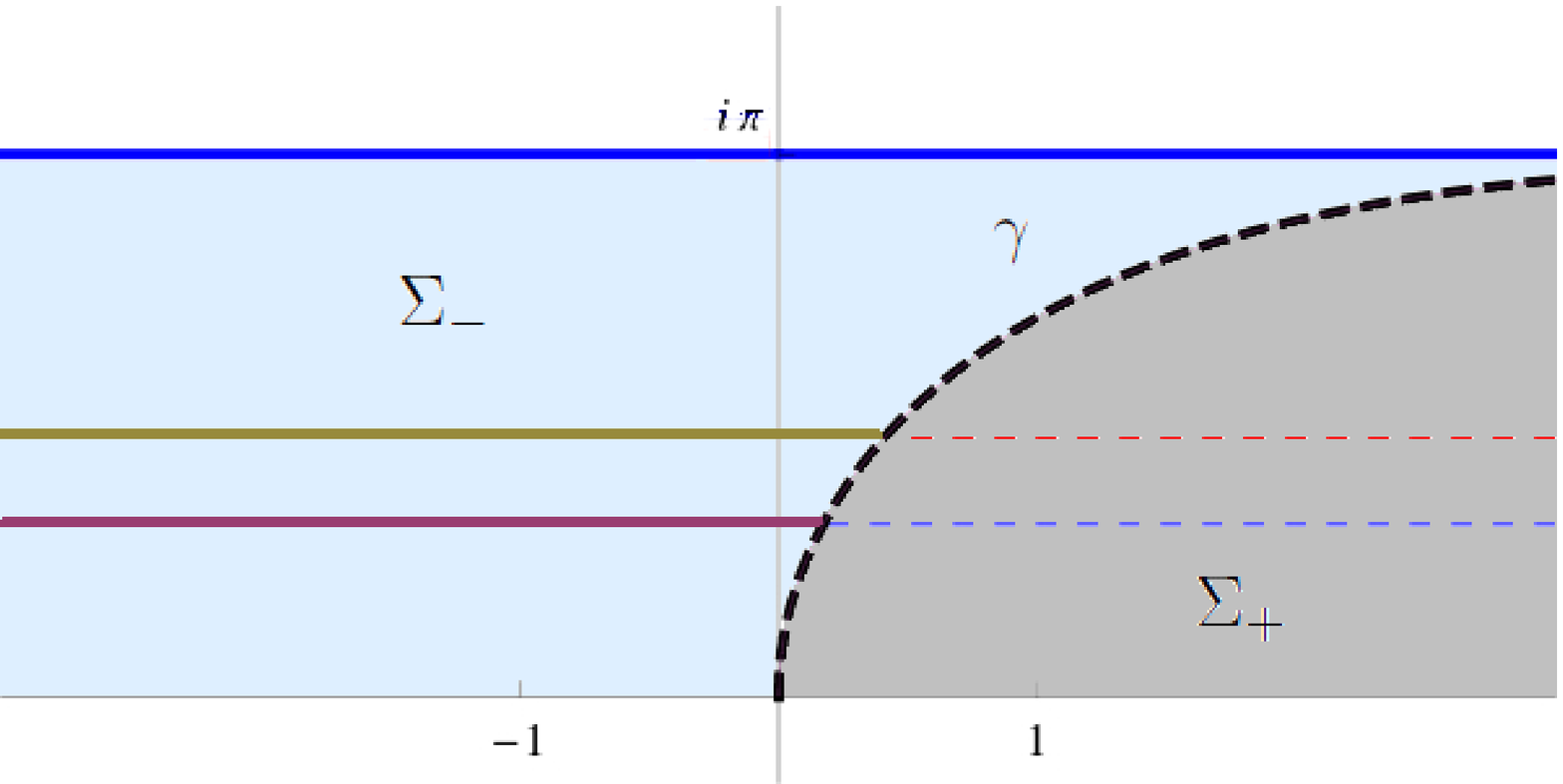}\\
{\Huge $\downarrow$}$^{f}$  \\
\includegraphics[height=1.5in]{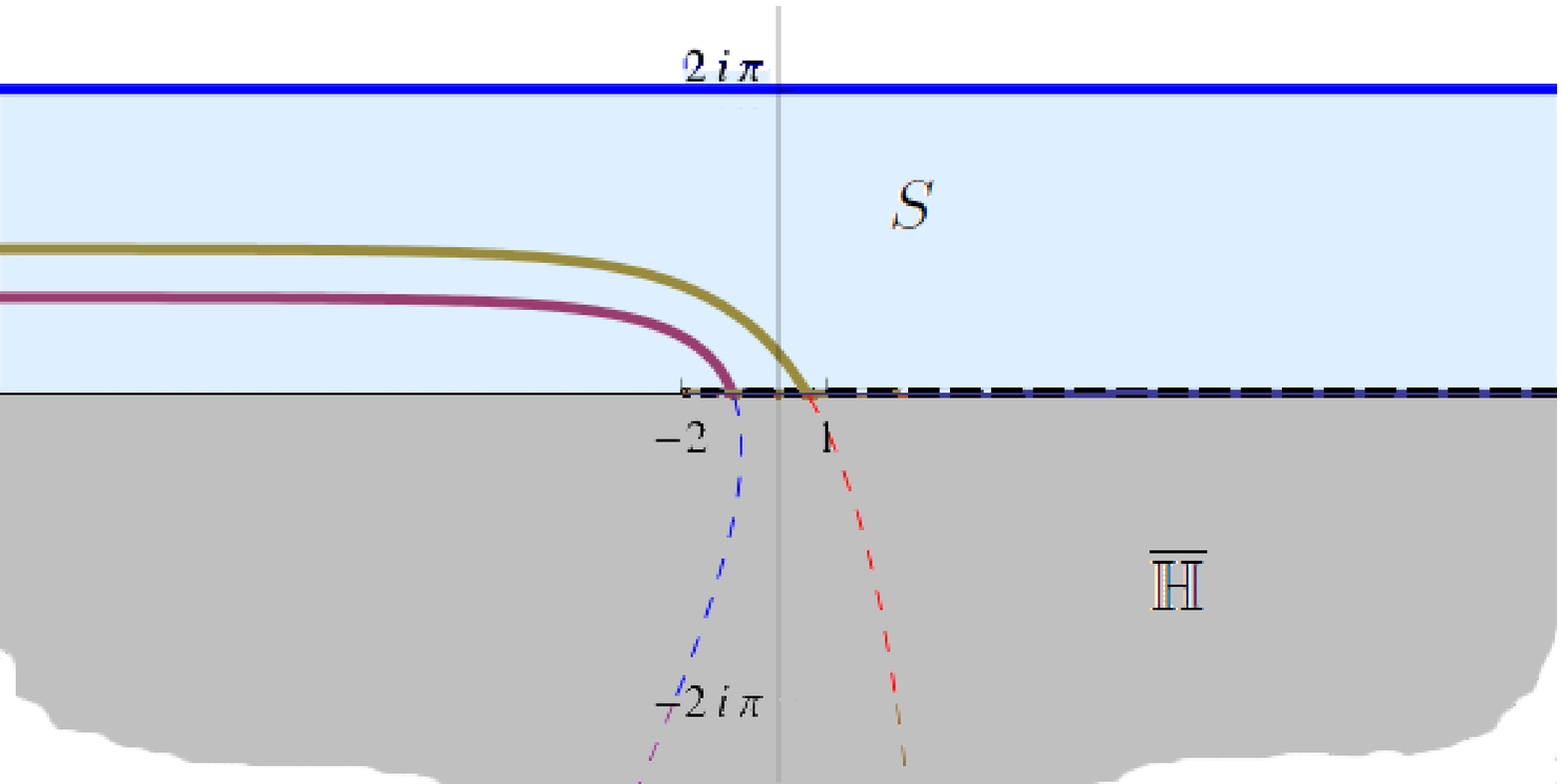}
\end{tabular}
\end{center}
\caption{\label{Fig3}  Conformal maps  $f|\Sigma_-$ and $f|\Sigma_+$.}
\end{figure}

Both maps $f|\Sigma_-$ and $f|\Sigma_+$ extend to conformal maps of larger regions. Because $f(\bar z) = \overline{f(z)}$, $f$ maps $\overline{\Sigma_+}$ conformally onto $\HH$. As $f(\RR^+)= (-\infty,-2)$, $f$ is a conformal map of $\Omega_+ := \Sigma_+\cup \RR^+ \cup \overline{\Sigma_+}$ onto the slit plane $\bar \HH \cup (-\infty,-2) \cup \HH = \CC \setminus[-2,+\infty)$. Let $h_+:\CC \setminus[-2,+\infty) \to \Omega_+$ be the inverse function for $f|\Omega_+$. The conformal extension of $f|\Sigma^-$ is more involved to describe. The fact that $f(\bar z) = \overline{f(z)}$ implies $f$ maps $\overline{\Sigma_-}$ conformally onto $\bar S$. Since $f(\RR^-) = (-\infty, -2)$, $f$ is a conformal map of $\Delta_0 = \Sigma_- \cup \RR^- \cup \overline{\Sigma_-}$ onto the slit closed strip $S_0 = S \cup (-\infty,-2)\cup \bar S = \{w: |\Im w | \leq 2\pi\} \setminus [-2,+\infty)$ with the upper (lower) edge of $\Delta_0$ corresponding to the upper (lower) edge of $S_0$. It is straightforward to verify that $f(z+2\pi i) = f(z) + 4\pi i$ for each $n \in \ZZ$. This functional relationship implies that $f$ is a conformal map of $\Delta_n = \Delta_0 + 2\pi n i$ onto $S_n = S_0 + 4\pi n i$ for each $n \in \ZZ$. Hence, $f$ is a conformal map of $\Omega_- := \cup_{n \in \ZZ} \Delta_n$ onto the infinitely slit plane $S_\infty := \bigcup_{n \in \ZZ} S_n = \CC \setminus \bigcup_{n\in \ZZ} \big([-2,+\infty)+4\pi ni\big)$. Let $h_-:S_\infty \to \Omega_-$ be the inverse function of $f|\Omega_-$. Because $\CC \setminus[-2,+\infty)  \supset S_\infty$, we may regard $h_+$ as defined on $S_\infty$, so $h_\pm$ have a common domain.
(See Fig. \ref{Fig4DM}.)
\begin{figure}[hbt]
\begin{center}
\begin{tabular}{cc}
\includegraphics[height=1.2in]{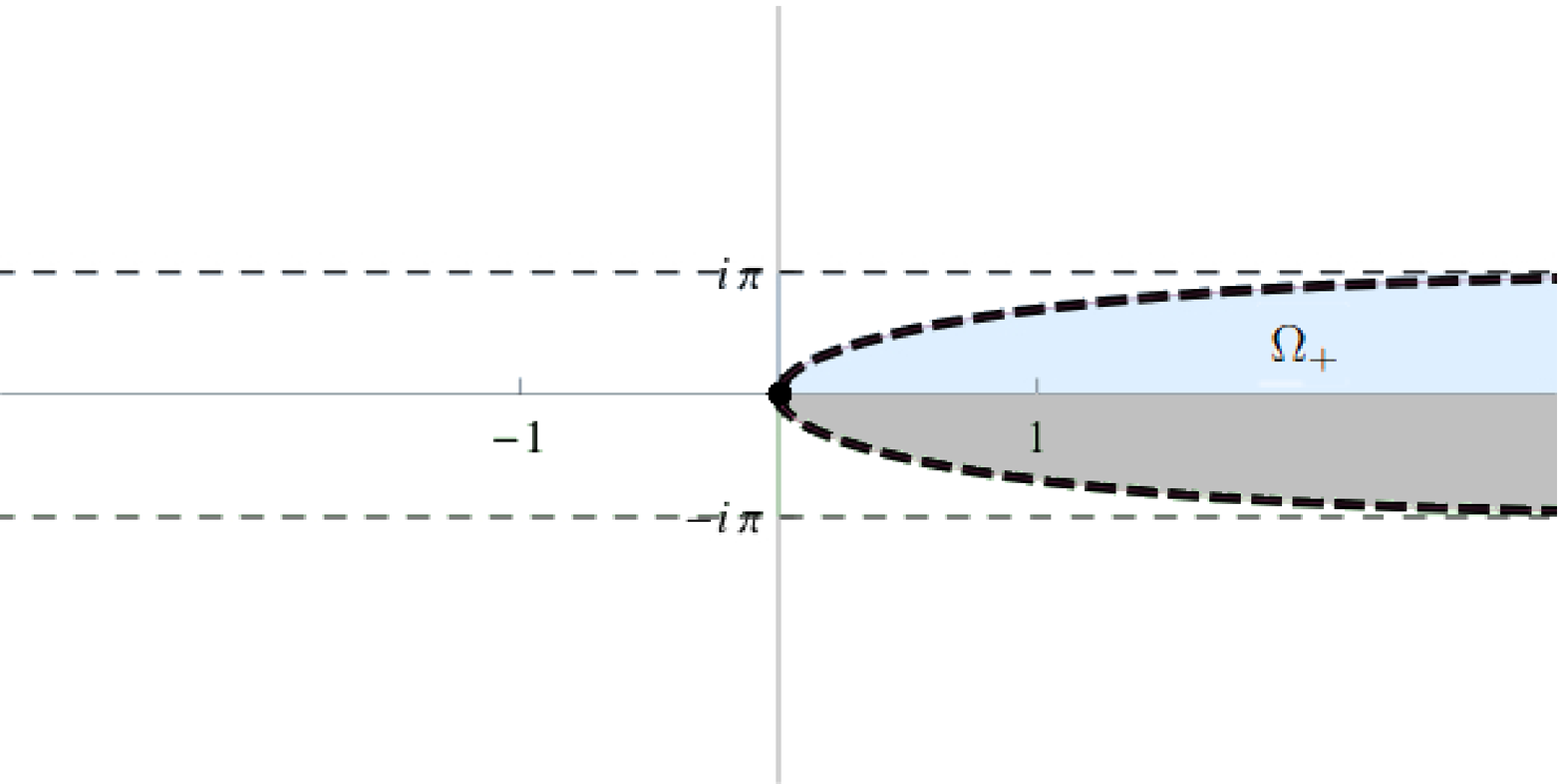}&\includegraphics[height=1.2in]{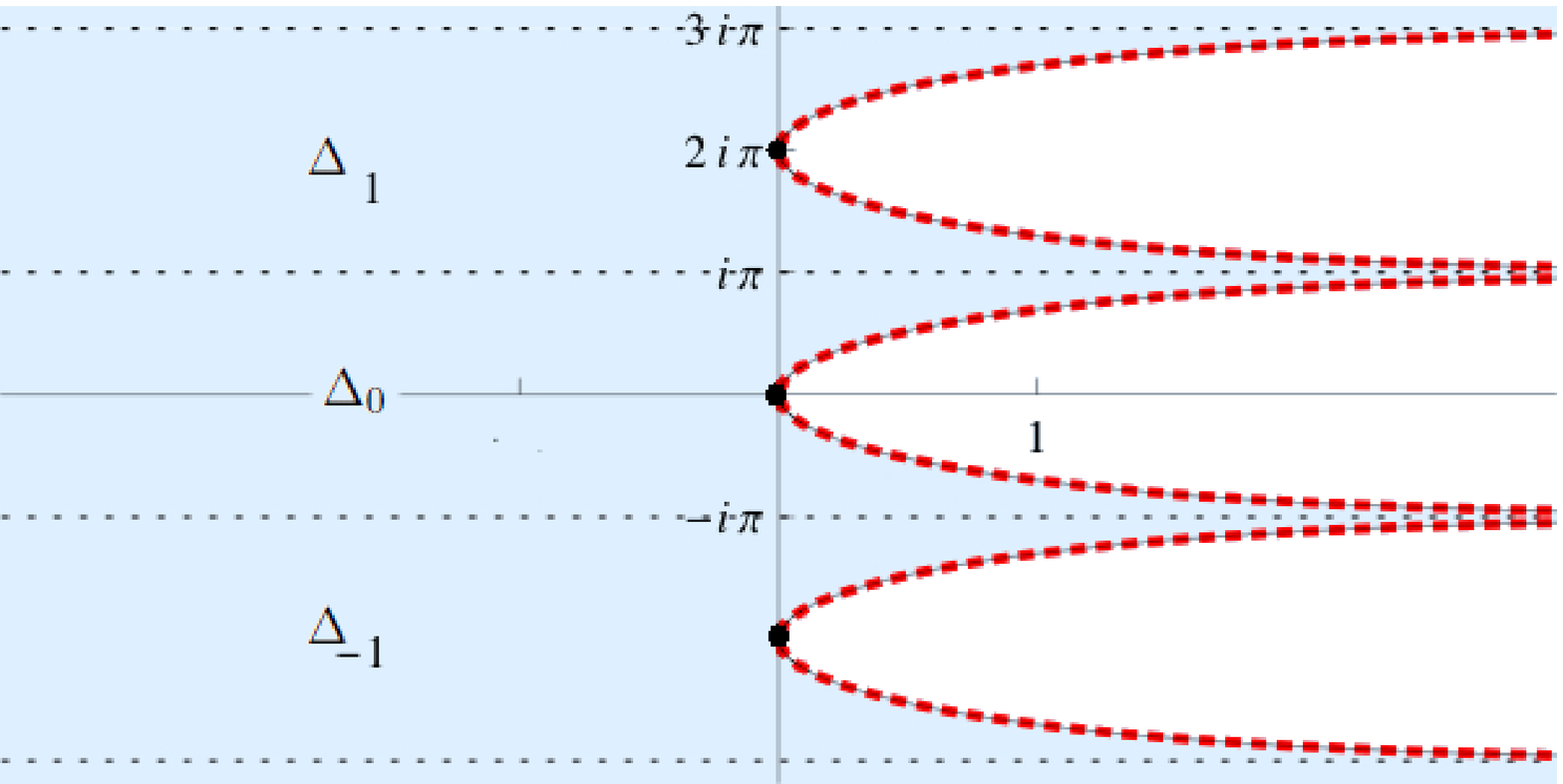}\\
{\Huge $\downarrow$}$^{f|\Omega_+}$& {\Huge $\downarrow$}$^{f|\Omega_-}$ \\
\includegraphics[height=1.2in]{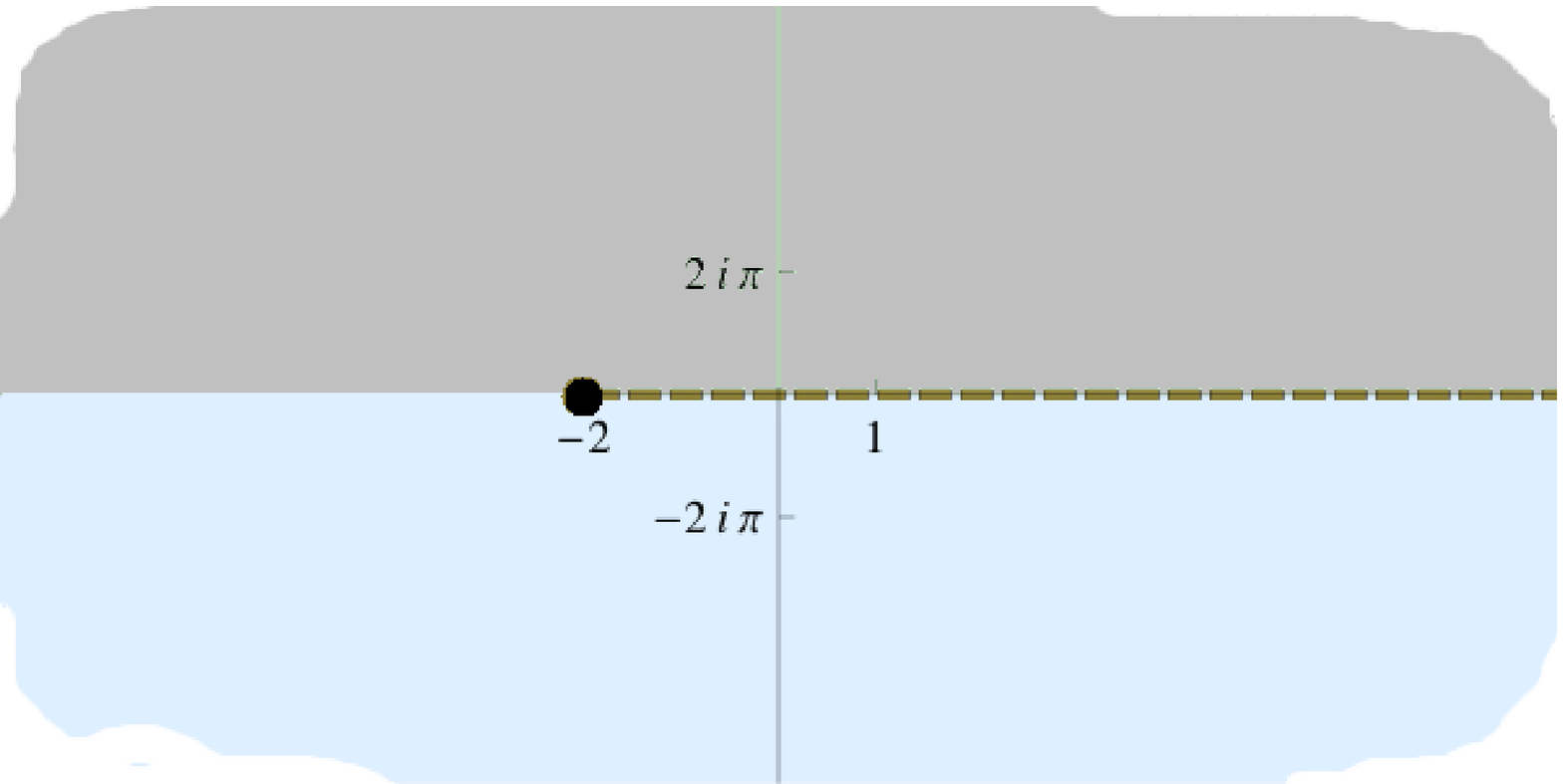}&\includegraphics[height=1.2in]{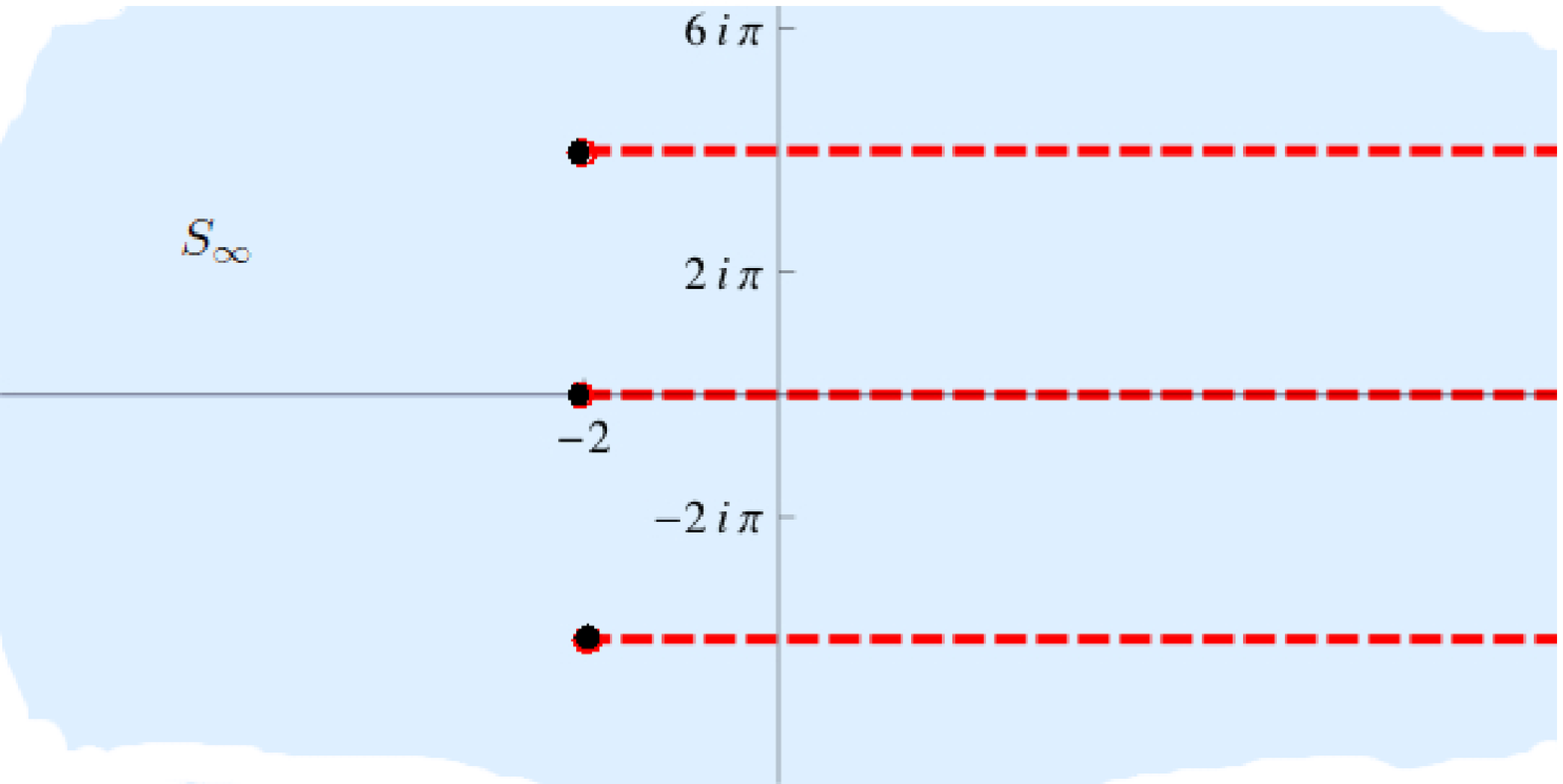}
\end{tabular}
\end{center}
\caption{\label{Fig4DM}  Conformal maps $f|\Omega_+$ and $f|\Omega_-$ with
 $\Omega_+ := \Sigma_+\cup \RR^+ \cup \overline{\Sigma_+}$ and
$\Omega_-=\bigcup_{n=-\infty}^\infty \Delta_n$.}
\end{figure}

The two conformal maps $h_\pm$ just constructed provide analytic extensions to $S_\infty$ of the real-valued functions $h_\pm$.
This allows us to define a pair of functions
 \begin{equation}\label{phi} 
 \varphi_\pm(z):=\frac{e^{k_0 h_\pm(z)}}{(e^{h_\pm(z)}-1)^2}
 \end{equation}
which are analytic in $S_\infty$.

\begin{lemma}\label{L2}
For each point $u \in (-\infty,-2)$ the power series expansion of $\varphi_\pm$ has radius of convergence $2-u$,
 $\varphi_\pm$ is analytic in the slit disk $D(u,r)\setminus[-2,+\infty)$, where $r=r(u) = \sqrt{(u+2)^2 + 16\pi^2}$.
 Furthermore,
  $\varphi_\pm(w)\approx -1/(w+2)$ as $w\to -2$ in $S_\infty$. \end{lemma}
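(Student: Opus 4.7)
The lemma packages three statements about $\varphi_\pm$ — the radius of convergence of its Taylor series at $u$, analyticity on a slit disk, and the leading asymptotic at $-2$. My plan is to deduce all three from a local analysis of $h_\pm$ at the branch point $-2$, combined with the global analyticity of $\varphi_\pm$ on $S_\infty$ already established by the conformal-mapping construction.

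The first step is to extract the local form of $h_\pm$ near $w=-2$. Since $f(0)=-2$, $f'(0)=0$, and $f''(0)=-2$, Taylor expansion gives $f(z) = -2 - z^2 + O(z^3)$, so $-2$ is a critical value of $f$ and the two inverse branches coalesce there. A Weierstrass-type rearrangement of $z^2 = -(w+2) + O(z^3)$ produces
\begin{equation*}
h_\pm(w) \;=\; \pm\, i\sqrt{w+2}\,\bigl(1 + O(w+2)\bigr) \qquad \text{as } w\to -2 \text{ in } S_\infty,
\end{equation*}
with opposite choices of the square root on the two branches. Substituting into \eqref{phi} and using $e^{h}-1 = h\bigl(1+O(h)\bigr)$ gives
\begin{equation*}
\bigl(e^{h_\pm(w)}-1\bigr)^2 \;=\; h_\pm(w)^2\bigl(1+O(h_\pm(w))\bigr) \;=\; -(w+2)\bigl(1+o(1)\bigr),
\end{equation*}
while $e^{k_0 h_\pm(w)} = 1 + O(\sqrt{w+2})$. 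Dividing establishes $\varphi_\pm(w) \sim -1/(w+2)$, i.e.\ the asymptotic claim.

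The analyticity-on-the-slit-disk claim then follows from the preceding construction: $h_\pm$, hence $\varphi_\pm$, is analytic on all of $S_\infty = \CC \setminus \bigcup_{n\in\ZZ}\bigl([-2,+\infty)+4\pi n i\bigr)$. The quantity $r(u) = \sqrt{(u+2)^2+16\pi^2}$ is precisely the Euclidean distance from the real point $u$ to either of the branch points $-2 \pm 4\pi i$; since $u$ sits on the real axis and the slits at heights $\pm 4\pi i$ approach $u$ most closely at those endpoints, the open disk $D(u,r(u))$ meets no slit of $S_\infty$ other than $[-2,+\infty)$. Consequently $\varphi_\pm$ is analytic on $D(u,r(u)) \setminus [-2,+\infty)$. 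For the radius-of-convergence statement, note that the smaller disk $D(u,|u+2|)$ is contained in this slit disk and meets the real axis in the interval $(2u+2,-2)$, which avoids $[-2,+\infty)$; therefore $\varphi_\pm$ is analytic throughout $D(u,|u+2|)$, and its Taylor series at $u$ converges there. The asymptotic from the first step places a genuine singularity at the boundary point $-2$, so by Cauchy--Hadamard the radius of convergence equals the distance from $u$ to this singularity.

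The chief technical difficulty is the first step: rigorously producing the local form $h_\pm(w) = \pm i\sqrt{w+2}(1+O(w+2))$ at the branch point and extracting the leading behaviour of $\varphi_\pm$, since $-2$ is a critical value where the two inverse branches meet in a square-root fashion. Once that local inversion is secured, the other two claims reduce to routine geometric bookkeeping about the configuration of slits in $S_\infty$ and to the standard principle that the radius of convergence equals the distance to the nearest singularity.
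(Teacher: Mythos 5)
Your overall architecture matches the paper's: analyticity inherited from the conformal construction of $h_\pm$, distance bookkeeping to the slits of $S_\infty$ for the two radii, and a local computation at $w=-2$ for the asymptotics. For the asymptotics you take a somewhat different route: you invert $f$ locally at its critical point via a Puiseux expansion $h_\pm(w)=\pm i\sqrt{w+2}\,(1+\cdots)$, whereas the paper sidesteps the inversion entirely by substituting $w=2(z-e^z)$ and computing $\lim_{w\to-2}(w+2)\varphi_\pm(w)=2\lim_{z\to0}e^{k_0z}(1+z-e^z)/(e^z-1)^2=-1$ directly. Both work; the substitution is cleaner and avoids justifying the fractional-power expansion. (A small imprecision: the correction factor in your expansion should be $1+O(\sqrt{w+2})$ rather than $1+O(w+2)$, though this does not affect the leading order.)

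The one genuine gap is the step ``$h_\pm$, hence $\varphi_\pm$, is analytic on all of $S_\infty$.'' The ``hence'' is not automatic: $\varphi_\pm=e^{k_0h_\pm}/(e^{h_\pm}-1)^2$ has a denominator that vanishes wherever $h_\pm(w)\in2\pi i\ZZ$, and such vanishing is precisely what produces the singularity at $w=-2$ that your third claim describes. You must therefore rule out $h_\pm(w)\in2\pi i\ZZ$ for $w$ in the interior of the domain; this matters because your slit disk reaches imaginary parts just short of $\pm4\pi$, so several points of $2\pi i\ZZ$ lie in the range of $h_-$. The paper supplies exactly this check: $h_+$ takes values in $\Omega_+$, which misses $2\pi i\ZZ$ entirely, while the solutions of $h_-(w)=2n\pi i$ are $w=-2+4\pi ni$, i.e., endpoints of the slits removed from $S_\infty$. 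With that observation added, your argument is complete. One cosmetic remark: the radius of convergence you derive is the distance from $u$ to $-2$, namely $|u+2|=-u-2$; this agrees with the paper's proof and with its later use (the value $2(e^\lambda-\lambda-1)$ at $u=2\lambda-2e^\lambda$), the ``$2-u$'' in the lemma statement being a typo.
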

\begin{proof}
For each point $u \in (-\infty,-2)$ the power series expansion of $h_\pm$ has radius of convergence $|u+2|$, the distance from $u$ to $-2$ because $-2$ is the closest singularity of $h_\pm$. Also, $h_\pm$ is analytic in the slit disk $D(u,r)\setminus[-2,+\infty)$, where $r=r(u) = \sqrt{(u+2)^2 + 16\pi^2}$ is the distance from $u$ to $-2 \pm 4\pi i$.

Note that singularities of $\varphi_-$ that arise from $h_-(w)=2n \pi i$ are located
at the slits taken out of $S_\infty$ and that $ h_+(w)\in \Omega_+$ cannot take values in $2n\pi i$. Thus
  $\varphi_\pm$ is also analytic in the slit disk $D(u,r)\setminus[-2,+\infty)$.

Substituting $w=2(z-e^z)$, we see that
\begin{eqnarray*}
\lim_{w\to -2,\; w\in S_\infty}(w+2)\varphi_\pm(w)&=&
\lim_{w\to -2, \; w\in S_\infty}e^{k_0 h_\pm(w)}\frac{w+2}{(e^{h_\pm(w)}-1)^2}
\\
&=&2\lim_{z\to 0,\;  z\in \Omega_\pm}e^{k_0z} \frac{1+z-e^z}{(e^z-1)^2}\ =\ -1.
\end{eqnarray*}

\end{proof}

\begin{proof}[Conclusion of proof of Proposition \ref{P 1D} for $\alpha=2$]
We now prove \eqref{1D Pressure} and identify the limit. Using functions $\varphi_\pm$ constructed above, define
 \begin{equation}\label{tilde G+}
   \widetilde{G}(z,\la):=\begin{cases}
     (e^\la-1)^2\varphi_+(z (e^\la -1)^2+ 2\la - 2 e^\la) & \mbox{ if $\la>0$ }\\
(e^\la-1)^2\varphi_-(z (e^\la -1)^2+ 2\la - 2 e^\la) & \mbox{ if $\la<0$ }.
   \end{cases}
 \end{equation}
  From Lemma \ref{LC1} we see that function $\widetilde{G}(z,\la)$
  satisfies \eqref{generating_function_pde}  for all $(z,\la) \in\mathcal{D}_+\cup\mathcal{D}_- $.
By uniqueness of the PDE solution in each of the two regions, 
 $G(z,\la)=\widetilde{G}(z,\la)$ for all $(z,\la)\in \mathcal{D}_+\cup\mathcal{D}_-$.
In particular, for each fixed $\la\ne 0$,
$\limsup_{n\to\infty}(m_n(\la))^{1/n}$ is the reciprocal of the
radius of convergence of the series expansion of
$\widetilde{G}(z,\lambda)$ at $z=0$. The latter  is $(1-e^\la)^{-2}$ times
the radius of convergence for $\varphi_{sign(\la)}(w)$ at
$u=2\la-2 e^\la\in(-\infty,
-2)$ which by Lemma \ref{L2} is   $2(e^\la-\la-1)$. Furthermore, the lemma implies that there is $\eta=\eta(\la)>0$ such
that 
 $\varphi_{sign(\la)}(w)$ is analytic on the slit disk
$\{w: |w-u|<2(e^\la-\la-1)(1+\eta)\}\setminus [-2,\infty)$. Since after appropriate translation and re-scaling  this slit disk is larger than the  indented disk
 $\Delta(\pi/4,\eta)$ introduced in \cite[(2.5)]{Flajolet-Odlyzko-90}, and the second part of Lemma \ref{L2} gives  $\varphi_\pm(w)\approx -1/(w+2)$ as $w\to -2$, we can apply \cite[Corollary 2]{Flajolet-Odlyzko-90}
 to get \eqref{1D Pressure}.

 Finally, as $m_n(0)=1$  the
 convergence at $\la=0$
 holds trivially.
\end{proof}
%


\section{Proof of  Theorem \ref{T2}}\label{PT2}

We follow the method and notation of Dupuis-Ellis
\cite{Dupuis-Ellis-97}.  Although some arguments are analogous to
those found in \cite[Chapter 6]{Dupuis-Ellis-97} which considers random walk models with time-homogeneous continuous statistics, and \cite{Zhang-Dupuis-08} where a different model with
a time singularity at $t=0$ is examined, for completeness we give all relevant details as several differ, especially in
the lower bound proof.

Let $X^n_{j} = Z_{j-k_0+1}/n$ for
$k_0\leq j\leq n$, and set $X^n_j=j/n$ for $0\leq j\leq k_0$. Then, noting
(\ref{Z-def}), given $X^n_j$, we have $X^n_{j+1} = X^n_j + v^n_j/n$
where $v^n_j$ has Bernoulli distribution
$\rho_{\sigma_n(j/n),X^n_j}$.  Here,
\begin{eqnarray*}
\sigma_n(t)&=&\begin{cases}
s_{\lfloor nt-k_0+1\rfloor}/n & \ {\rm for \ } t\geq \frac{k_0}{n}\\
0 & \ {\rm for \ } t< \frac{k_0}{n}
\end{cases}\\
\rho_{\sigma,x}(A)& = & \frac{x}{\sigma}\delta_0(A) + \left(1-\frac{x}{\sigma}\right)\delta_1(A)\ \ \ \  \ {\rm for\ }A\subset \RR,\
0\leq x\leq \sigma,
\end{eqnarray*}
 and
$\rho_{0,0} := \delta_1$.

Define $X^n_{\cdot}$ as the polygonal interpolated path connecting
points $(j/n, X^n_j)$ for $0\leq j\leq n$.
 Also, for probability measures $\mu
\ll\nu$ such that $d\mu=f d\nu$, denote $R(\mu\|\nu)=\int f \log
fd\nu$,
 the relative entropy; set $R(\mu\|\nu)=\infty$
 when $\mu$ is not absolutely continuous with respect to $\nu$.

Let $h: C([0,1],\RR)\rightarrow \RR$ be a bounded continuous
function. To prove Theorem \ref{T2}, we need only establish Laplace
principle upper and lower bounds \cite[page 74]{Dupuis-Ellis-97}.
The upper bounds are to show
$$\limsup_{n\rightarrow\infty} \frac{1}{n}\log
E\big\{\exp[-nh(X^n_\cdot)]\big\} \ \leq \ - \inf_{\varphi\in
  \mathcal C_1}\big\{I(\varphi) + h(\varphi)\big\}$$
for a rate function $I$ and a
closed subset of continuous functions $\mathcal C_1$.
The lower bounds are to prove the reverse inequality
$$\liminf_{n\rightarrow\infty} \frac{1}{n}\log
E\big\{\exp[-nh(X^n_\cdot)]\big\} \ \geq \ - \inf_{\varphi\in
  \mathcal C_1}\big\{I(\varphi) + h(\varphi)\big\}.$$

Define, for $k_0+1\leq j\leq n$, noting $X^n_j = j/n$ for $j\leq k_0$
is deterministic, that
 $$ W^n(j,x_j,\ldots,x_{k_0+1}) \ =
\ -\frac{1}{n}\log
E\Big[e^{-nh(X^n_\cdot)}\Big|X^n_j=x_j,\ldots,X^n_{k_0+1}=x_{k_0 +1}
\Big],$$ and
$$W^n\ :=\ W^n(k_0,\emptyset)\ =\  -\frac{1}{n}\log
E\Big[e^{-nh(X^n_\cdot)}\Big].$$ Then, by the Markov property, for $k_0+1\leq j\leq
n-1$,
\begin{eqnarray*}
e^{-nW^n(j,x_j,\ldots,x_{k_0+1})} &=&  E\Big[e^{-nW^n(j+1,X^n_{j+1}, x_j,\ldots,x_{k_0+1})}\Big|X^n_j=x_j,\ldots,X^n_{k_0+1}=x_{k_0+1}\Big]\\
&=&\int e^{-nW^n(j+1,x_j +
v/n,x_j,\ldots,x_{k_0+1})}d\rho_{\sigma_n(j/n),x_j}(dv).
\end{eqnarray*}

By a property of relative entropy \cite[Prop. 1.4.2
(a)]{Dupuis-Ellis-97}, for $k_0+1\leq j\leq n-1$,
\begin{eqnarray*}
&&W^n(j,x_j,\ldots,x_{k_0+1})\\
&&= \  -\frac{1}{n}\log\int
e^{-nW^n(j+1,x_j+y/n,x_j,\ldots,x_{k_0+1})}\rho_{\sigma_n(j/n),x_j}(dv)\\
&&= \ \inf_\gamma\Big\{\frac{1}{n}R(\gamma\|
\rho_{\sigma_n(j/n),x_j}) + \int
W^n(j+1,x_j+y/n,x_j,\ldots,x_1)\gamma(dy)\Big\}.
\end{eqnarray*}
Also, the boundary condition $W^n(n,x_n,\ldots,x_{k_0+1})  =
h(x_\cdot)$ holds with respect to the linearly interpolated path
$x_\cdot=x^n_\cdot$ connecting $\{(l/n,l/n)\}_{0\leq l\leq k_0}$,

The basic observation in the Dupuis-Ellis method is that
$W^n(j,x_j,\ldots,x_{k_0+1})$ satisfies a control problem
(\cite[section 3.2]{Dupuis-Ellis-97})
whose solution for $k_0\leq j\leq n-1$ 
 is
$$V^n(j,x_j,\ldots,x_{k_0+1}) \ = \ \inf_{\{v^n_i\}}\bar{E}_{j,x_j,\ldots,x_{k_0+1}}
\Big\{\frac{1}{n}\sum_{i=j}^{n-1}R(v^n_i(\cdot)\|
\rho_{\sigma_n(i/n),\bar{X}^n_i}) + h(\bar{X}^n_\cdot)\Big\}.$$
Here, $v^n_i(dy) = v^n_i(dy ; x_{k_0},\ldots,x_i)$ is a Bernoulli
distribution given $x_{k_0},\ldots,x_i$ for $k_0\leq i\leq n-1$ and
in the display $\nu^n_i(\cdot)= \nu^n_i(dy|\bar{X}^n_{k_0},\ldots,\bar{X}^n_i)$;
$\{\bar{X}^n_i; 0\leq i\leq n
\}$ is the adapted path satisfying
$\bar{X}^n_l= l/n$ for $0\leq l\leq k_0$, and $\bar{X}^n_{l+1} =
\bar{X}^n_l + \bar{Y}^n_l/n$ for $k_0\leq l\leq n-1$ where
$\bar{Y}^n_l$, conditional on $(\bar{X}^n_l,\ldots
\bar{X}^n_{k_0})$ has distribution $v^n_l(\cdot)$;
$\bar{X}^n_\cdot$ is the interpolated path with respect to
$\{\bar{X}^n_l\}$; and $\bar{E}_{j,x_j,\ldots,x_{k_0+1}}$ denotes
conditional expectation with respect to the $\bar{X}^n_\cdot$
process given the values $\{\bar{X}^n_l = x_l: k_0+1\leq l\leq j\}$
at step $k_0+1\leq j\leq n$. The boundary conditions are $V^n(n,x_n,\ldots,x_{k_0+1}) = h(x_{\cdot})$ and
\begin{equation}\label{V}
V^n(k_0,\emptyset) = V^n =
\inf_{\{v^n_j\}}\bar{E}\Big\{\frac{1}{n}\sum_{j=k_0}^{n-1}R(v^n_j(\cdot)\|
\rho_{\sigma_n(j/n),\bar{X}^n_j}) + h(\bar{X}^n_\cdot)\Big\}.
\end{equation}

In particular, by \cite[Corollary 5.2.1]{Dupuis-Ellis-97},
\begin{equation}\label{DE-main}
W^n \ =\ - \frac{1}{n}\log E\Big[e^{-nh(X^n_\cdot)}\Big] \ =\
V^n.\end{equation}  The goal will be to take Laplace limits using
this representation.  To simply later expressions, we will take
$v^n_j = \delta_1$ for $0\leq j\leq
k_0-1$ when $k_0\geq 1$.

\subsection{Upper bound}
To establish the upper bound, we first put the controls $\{v^n_j\}$
into continuous time paths: Let $v^n(dy|t)  = v^n_j(dy)$ for
$t\in (j/n,(j+1)/n], \; 0\leq j \leq n-1$, and $v^n(dy|0)= v^n_0$.  Define
$$v^n(A\times B) \ = \ \int_B v^n(A|t)dt$$ for $A\subset \RR$,
$B\subset [0,1]$. Define also the piecewise constant path
$\tilde{X}^n(t)  =  \bar{X}^n_j$
for $t\in (j/n,(j+1)/n], \; 0\leq j\leq n-1$, and $\tilde{X}^n(0)=0$.
 Then,
\begin{eqnarray}\label{V=inf}
V^n & = & \inf_{\{v^n_j\}}\bar{E}\Big\{\int_{0}^1 R(v^n(\cdot|t)\|
\rho_{\sigma_n(\lfloor nt\rfloor/n),\tilde{X}^n(t)})dt +
h(\bar{X}^n_\cdot)\Big\}.
\end{eqnarray}
From this control representation, as $|V^n|=|W^n|\leq \|h\|_\infty$ and $\rho_{\sigma,x}$ is supported on $K=\{0,1\}$, for
each $n$, there is $\{v^n_j\}$ supported on $K$ and corresponding $v^n(dy\times dt)
=v^n(dy|t)\times dt$ such that
$$W^n +\epsilon \ =\  V^n + \epsilon \ \geq \ \bar{E}\Big[\int_{0}^1
R(v^n(\cdot|t)\|\rho_{\sigma_n(\lfloor nt\rfloor/n),\tilde{X}^n(t)})dt +
h(\bar{X}^n_\cdot)\Big].$$

As the sets $K$ and
\begin{eqnarray*}
\Gamma  \ =\   \Big\{ \varphi\in C([0,1];\RR):\; \varphi(0)=0,\;
\varphi\uparrow,
\ {\rm Lipschitz, \ with\  bound\ } 1\Big\}
\end{eqnarray*}
are compact on $\RR$ and $C([0,1],\RR)$ respectively, and
$\{v^n_j\}$ are probability measures on $K$
and $\{\bar{X}^n_\cdot\}\subset \Gamma$, by
Prokhorov's theorem, the distributions of
  $(v^n(dy\times dt),\bar{X}^n_\cdot)$ have a subsequence which converges
  weakly to
  a limit distribution governing $(v,\bar{X}_\cdot)$ where for each realization,
$v$ is a probability measure on $K\times [0,1]$ and $\bar{X}_\cdot\in
\Gamma$. More precisely, let $(\bar{\Omega},\bar{F},\bar{P})$ be a
probability space where $v:\bar{\Omega}\rightarrow {\rm probability \
  measures \ on \ }K\times [0,1]$, and
$\bar{X}:\bar{\Omega} \rightarrow \Gamma$. Then, \cite[Lemma
3.3.1]{Dupuis-Ellis-97} gives that $v$ is the subsequential weak limit of
$v^n$, and $\bar{P}$-a.s. for $\omega\in \bar{\Omega}$,
$$v(A\times B|\omega) \ =\  \int_B v(A|t,\omega)dt$$
for some kernel $v(dy|t,\omega)$ on $K$ given $t$ and $\omega$.

Now, by the same proof given for \cite[Theorem 5.3.5]{Dupuis-Ellis-97} (only
\cite[equation (5.12)]{Dupuis-Ellis-97} in the theorem statement differs; in our context $\mu$ there is
replaced by $\rho_{\sigma_n(j/n),\tilde{X}^n_j}$),
 as $K$ is
compact, we have $(v^n,\bar{X}^n, \tilde{X}^n)$ has a subsequential
limit in distribution to $(v,\bar{X},\bar{X})$ [the last coordinate
with respect to $D([0,1],\RR)$].  Also, $\bar{P}$-a.s., for all
$t\in [0,1]$,
$$\bar{X}(t) \ =\  \int_{\RR\times [0,1]}yv(dy\times ds)
\ =\  \int_0^t\int_K y v(dy|s)ds.$$ In particular, $\bar{P}$-a.s.,
$\dot{\bar{X}}(t) = \int_Kyv(dy|t)$.

By Skorohod representation theorem, we can assume
$(v^n,\bar{X}^n,\tilde{X}^n) \rightarrow (v,\bar{X},\bar{X})$
converges a.s.  In particular, $\bar{X}^n$ converges uniformly to
$\bar{X}$ a.s., and it is clear that $\tilde{X}^n$ converges
uniformly to $\bar{X}$ a.s. as $\bar{X}$ is continuous (\cite[p.
154]{Dupuis-Ellis-97}).

Let $\mu_\delta(\cdot) = (1-\delta)^{-1}1_{[\delta,1]}(\cdot)$ and $\nu^n(dy\times \mu_\delta dt) = \nu^n(dy|t)\times \mu_\delta dt$.
Then, by \cite[Lemma 1.4.3 (f)]{Dupuis-Ellis-97} we have, for
$0<\delta<1$, that
\begin{eqnarray*}
&&\int_\delta^1 R\big(v^n(\cdot|t)\big\|\rho_{\sigma_n(\lfloor
nt\rfloor/n),\tilde{X}^n(t)}\big)dt\\
 &&\ \ \ \ \ \ \ \ \  = \ (1-\delta)R\big(v^n(dy\times
\mu_\delta dt)
\Big\| \rho_{\sigma_n(\lfloor nt\rfloor/n),\tilde{X}^n(t)}
\times \mu_\delta dt\big).\end{eqnarray*} 
Write,
\begin{eqnarray*}
\liminf_{n\to\infty}V^n &\geq& \liminf_{n\rightarrow\infty} \bar{E}\Big[\int_{\delta}^1
R\big(v^n(\cdot|t)\big\|\rho_{\sigma_n(\lfloor nt\rfloor/n),\tilde{X}^n(t)}\big)dt
+h(\bar{X}^n_\cdot)\Big]\\
&\geq&\liminf_{n\rightarrow\infty}
\bar{E}\Big[(1-\delta)R\Big(v^n(dy\times \mu_\delta dt)
\Big\|
\rho_{\sigma_n(\lfloor nt\rfloor/{n}),\tilde{X}^n(t)}
\times \mu_\delta dt\Big) +h(\bar{X}_\cdot)\Big]\\
&\geq& \bar{E}\Big[\int_\delta^1 R(v(\cdot|t)\|\rho_{\alpha
t,\bar{X}(t)})dt + h(\bar{X}_\cdot)\Big]\end{eqnarray*} where in the last
line we use Fatou's lemma, noting lower semi-continuity of $R$, $\nu^n(dy\times \mu_\delta dt) \rightarrow \nu(dy|t)\times \mu_\delta dt$ a.s., and
$\rho_{\sigma_n(\lfloor nt\rfloor/n),\tilde{X}^n(t)}$ converges
in distribution to $\rho_{\alpha t,\bar{X}(t)}$ for $t\in [\delta,1]$ a.s. since $\sigma_n(\lfloor nt\rfloor/n) \rightarrow \alpha t$,
$\tilde{X}^n(t)\rightarrow \bar{X}(t)$ uniformly on $[0,1]$ a.s., and $\rho_{\sigma,x}$ is continuous on
$\{(\sigma,x): \delta\leq \sigma \leq 1, 0\leq x\leq \sigma\}$ (cf. Section 6.2 \cite{Dupuis-Ellis-97}).

By \cite[Lemma 3.3.3(b)]{Dupuis-Ellis-97},
$$R(v(\cdot|t)\| \rho_{\alpha t,\bar{X}(t)}) \ \geq \ L\left(\alpha t,\bar{X}(t), \int y
v(dy|t)\right)$$ where
\begin{eqnarray}L(t,x,y ) &=& \sup_\theta
\Big\{ \theta y   -  \log \int e^{\theta z}
\rho_{t,x}(dz)\Big\}.\label{Ldef}\end{eqnarray}
We note, for
$t>0$,
$L( t,x,y)$ diverges when $x=0$, $0\leq y<1$, and $x=t$, $0<y\leq 1$ but is finite
when $0< x< t$ and $0\leq y\leq 1$, and in this case evaluates to
\begin{eqnarray}
L(t,x,y)&=&y   \log \left(\frac{ t y  }{ t-x}\right)+(1-y  )
   \log \left(\frac{ t (1-y )}{x}\right),
\label{LL}
\end{eqnarray}
understood with the usual convention $0\log(0) = 0$.

Since $\int y v(dy|t) = \dot{\bar{X}}(t)$, we have
$$\liminf_{n\to\infty}V^n \ \geq\  \bar{E}\Big[\int_\delta^1 L(\alpha t,\bar{X}(t),\dot{\bar{X}}(t))dt +
h(\bar{X}_\cdot)\Big].$$
As $L\geq 0$,
and $\bar X(\cdot)\in\Gamma$,
letting $\delta\downarrow 0$, we obtain,
$$\liminf_{n\to\infty}V^n\ \geq\  \inf_{\varphi\in\Gamma}\int _0^1 L(\alpha t,\varphi(t),\dot \varphi(t))dt+h(\varphi).$$

Taking into account \eqref{DE-main},
the upper bound holds with
$\mathcal C_1=\Gamma$ and
$$I(\varphi) \ =\  \int_0^1 L(\alpha t,\varphi(t),\dot{\varphi}(t))\, dt.$$


\subsection{Lower bound}
In the following, for typographical convenience, we write
 $E(X;\mathbb{A})$
for $\int_\mathbb{A} X dP$.  Let now $\psi^*\in \Gamma$ be such that
$I(\psi^*)<\infty$, and fix a bounded, continuous (in the sup
norm) function $h: C([0,1];\RR)\to \RR$. In view of \eqref{DE-main},
we need only show, for each $\eps>0$, that
\begin{equation}\label{LD LB goal}
\limsup_{n\to\infty}V^n\ \leq \ I(\psi^*)+h(\psi^*)+8\eps.
\end{equation}
\medskip
{\it Step 1.} Our first goal is to replace $\psi^*$ by its appropriate
regularization. We use the trick of considering a convex combination
of paths,
$$\psi_\theta(t)\ =\ (1-\theta)\psi^*(t)+\theta t$$
for $0\leq \theta\leq 1$.
Since $\|\psi_\theta-\psi^*\|_\infty\leq 2\theta$, it is clear,
for small enough $\theta>0$, that $|h(\psi_\theta)-h(\psi^*)|<\eps$.
Further, since $I$ is finite on the line $y=t$ with slope $1$, by convexity
arguments \cite[Lemma 1.4.3 (b)]{Dupuis-Ellis-97},  for small enough
$\theta>0$,
$$I(\psi_\theta)\ \leq\  I(\psi^*)+\eps. $$
We therefore fix $\theta>0$ such that  $I(\psi_\theta)<I(\psi^*)+\eps$ and $h(\psi_\theta)<h(\psi^*)+\eps$.

Next,  following \cite[p. 82]{Dupuis-Ellis-97}, we write
\begin{equation}\label{phi_kappa}
\varphi_\kappa(t) \ = \ \int_0^t \gamma_\kappa(s)ds,
\end{equation}
where $$\gamma_\kappa(t) \ =\  \kappa
\int_{i/\kappa}^{i+1/\kappa}\dot{\psi_\theta}(s)ds$$
for $t\in (i/\kappa, (i+1)/\kappa]$, $0\leq i\leq \kappa-1$, and $\gamma_\kappa(0)=\gamma_\kappa(1/\kappa)$.
For large enough $\kappa$, we have
\begin{eqnarray}
 \frac{\varphi_\kappa(t)}{t}&\geq& \theta\ >\ 0 \label{away 0}\\
 h(\varphi_\kappa) &\leq&h(\psi^*)+2\eps \label{h kappa}\\
 I(\varphi_\kappa) &\leq&I(\psi^*)+2\eps. \label{I kappa}
\end{eqnarray}
Inequality \eqref{away 0} is a property of $\psi_\theta$, and is preserved by \eqref{phi_kappa}.
Since
$$\lim_{\kappa\rightarrow \infty}\sup_{t\in [0,1]}|\psi_\theta(t) -
\varphi_\kappa(t)| \ =\  0,$$
inequality \eqref{h kappa} follows from continuity of $h$ by our choice of $\theta$.
Then, as $$\dot{\varphi}_\kappa (t) \ =\  \kappa\int_{\lfloor
t\kappa\rfloor/\kappa}^{\lceil t\kappa\rceil/\kappa}
\dot{\psi_\theta}(s)ds \ \rightarrow\  \dot{\psi_\theta}(t)$$ as $\kappa\uparrow
\infty$, and
$(t,x,y)\mapsto L(\alpha t,x,y)$
(cf. (\ref{Ldef}),(\ref{LL})) as $\alpha>1$
 is bounded, uniformly continuous on the compact set
$$\Big\{(t,x,y):\; 0\leq t\leq 1,\; \theta t\leq x\leq t,\; \theta\leq y\leq 1\Big\},$$
by the dominated convergence theorem, $\lim_\kappa I(\varphi_\kappa)= I(\psi_\theta)$.
 Inequality \eqref{I kappa} follows due to our choice of $\theta$.

We now fix $\kappa$ such that the above bounds hold.

Since \eqref{I kappa} implies $I(\varphi_\kappa)<\infty$,
we now choose a $0<\delta<1/3$ such that
\begin{equation}\label{alpha}
\int_0^\delta L(\alpha t,\varphi_\kappa(t),\dot\varphi_\kappa(t))dt\
<\ \eps.\end{equation}

We will also need an estimate on $s_n$.  From assumptions \eqref{s_n},
there is an $0<\eta<1/2$ and $k_1\geq k_0+1$
such
that for $n\geq k_1$,
\begin{equation}\label{eta}
\eta\ \leq\ \frac{n}{s_{n-k_0+1}}\ \leq \ 1-\eta.
\end{equation}
With respect to $\eta$ and $\theta$, we impose additionally
that $\delta$ satisfies
\begin{equation}\label{alpha2}
-\delta \log(\delta\eta\theta)\ <\ \eps/10.\end{equation}
\medskip
{\it Step 2.}
We now build a sequence of controls based on $\varphi_\kappa$.
Recall that we already set $v^n_j(dy)=\delta_1$ for $0\leq j\leq k_0-1$ when $k_0\geq 1$.
Define
$$v^n_j(dy;x_{k_0},\dots,x_j) \ =\
\begin{cases}
\rho_{\sigma_n(j/n),{x}_j}& \ {\rm for \ }k_0\leq j \leq k_1\\
\rho_{1,1-\dot\varphi_\kappa(j/n)} &  \ {\rm for \ }j\geq k_1+1.
\end{cases}
$$
Note, for $j\geq k_1+1$, $\nu^n_j$ does not depend on auxiliary inputs $x_{k_0},\ldots, x_j$, and is in fact the Bernoulli distribution with success probability $\dot{\varphi}_\kappa (j/n)$.

Define also $\bar{X}^n_l =l/n$ for $0\leq l\leq k_0$, and $\bar{X}^n_{j+1} = \bar{X}^n_j
+\bar{Y}^n_j/n$ for $j\geq k_0$ where
$$\bar{P}(\bar{Y}^n_j\in dy|\bar{X}^n_0,\ldots, \bar{X}^n_j)\  =\
v_j(dy;\bar{X}^n_0,\ldots, \bar{X}^n_j).
$$
  Thus, for $j\geq 1$, $\bar{X}^n_j =
(1/n)\sum_{\ell=0}^{j-1}Y^n_\ell$ where $\{Y^n_j\}_{j\geq k_1+1}$ are
independent Bernoulli random variables with corresponding means
$\{\dot{\varphi}_\kappa (j/n)\}_{j\geq k_1+1}$.

\medskip \noindent
{\it Step 3.} We now collect some useful estimates.\vskip .1cm
\noindent A.  Since $\bar Y^n_0\equiv 1$ (when $k_0=0$, recall $\rho_{\sigma,0}=\delta_1$), and the increments are at most one,
 we have $1/n\leq \bar{X}^n_j\leq
j/n$ for $1\leq j\leq n$. \vskip .1cm

 \noindent B. We have
\begin{equation}\label{as}
\lim_{n\uparrow \infty} \sup_{0\leq j/n\leq 1} \left|\bar{X}^n_j
-\frac{1}{n}\sum_{l=0}^j \dot{\varphi}_\kappa(l/n)\right| \ =\  0\ \ \ \ \
{\rm a.s.}
\end{equation} Indeed, for large enough $n$, as $0\leq \dot\varphi_\kappa\leq 1$,
 $(1/n)\sum_{j=0}^{k_1} |\dot \varphi_\kappa(j/n)|+k_1/n<n^{-1/8}$.
 Then, by Doob's maximal inequality,
\begin{eqnarray}
&&\bar{P}\Big[\sup_{0 \leq j/n\leq 1} \Big|\bar{X}^n_j
-\frac{1}{n}\sum_{l=0}^j\dot{\varphi}_\kappa(l/n)\Big|>
 \frac{\theta}{2n^{1/8}}
 \Big]\nonumber\\
&& \ \ \ \ \leq\   Cn^{1/2}\bar{E}\Big|\bar{X}^n_n -
\bar E(X_n^n)\Big|^4\nonumber\\
&&\ \ \ \ \leq \ \frac{1}{ n^{7/2}}\Big[C\sum_{l=0}^n
\dot\varphi_\kappa(l/n) (1- \dot\varphi_\kappa(l/n))(1-3
\dot\varphi_\kappa(l/n)+3 \dot\varphi_\kappa^2(l/n))\nonumber\\
&&\ \ \ \ \ \ \ \ \ \ \ \ \ \ \ \ \ +\ C\left(\sum_{l=0}^n \dot\varphi_\kappa(l/n)
(1- \dot\varphi_\kappa(l/n))\right)^2\Big] \ \leq\  {C}
n^{-3/2}.\label{Rosenthal}\end{eqnarray} where $C$ is a
constant changing line to line.

\vskip .1cm \noindent C.  For $0\leq j\leq n$, from \eqref{away 0},
it follows that
\begin{equation}\label{LB phi}
\frac{1}{n}\sum_{l=0}^j \dot{\varphi}_\kappa (l/n)\ \geq \ \theta
\frac{j}{n}.
\end{equation}

\vskip .1cm \noindent D.
Let $j\geq k_1+1$.
Noting $1/n\leq \bar{X}^n_j\leq j/n$ (cf. part A)
 and bounds \eqref{eta}, we have
$$\eta \leq  1-\frac{j}{s_{j-k_0+1}} \leq 1-\frac{\bar{X}^n_j}{\sigma_n(j/n)}\leq 1, \ \ \ \frac{1}{s_{j-k_0+1}}\leq \frac{\bar{X}^n_j}{\sigma_n(j/n)}\leq 1-\eta.$$
Hence, $L(\sigma_n(j/n),
\bar{X}^n_j, \dot{\varphi}_\kappa (j/n))$ can be well evaluated (cf.
(\ref{Ldef}),  (\ref{LL})), and we may rewrite the relative entropy as
\begin{eqnarray*}
R(v^n_j\| \rho_{\sigma_n(j/n),\bar{X}^n_j})
&=& \dot{\varphi}_\kappa (j/n)\log[\dot{\varphi}_\kappa (j/n)/(1-\bar{X}^n_j/\sigma_n(j/n))]\\
&&\ \ \ +
(1-\dot{\varphi}_\kappa (j/n))\log\big((1-\dot{\varphi}_\kappa (j/n))/(\bar{X}^n_j/\sigma_n(j/n))\big)
\\
&=&
L(\sigma_n(j/n),
\bar{X}^n_j, \dot{\varphi}_\kappa (j/n)).
\end{eqnarray*}
Further, as $0\leq \dot
\varphi_\kappa \leq 1$, we have
\begin{eqnarray}
&&L(\sigma_n(j/n),
\bar{X}^n_j, \dot{\varphi}_\kappa (j/n))\nonumber\\
&&\ \ \ \ \  =\ \big[\dot
\varphi_\kappa(j/n)\log\dot\varphi_\kappa(j/n)+
(1-\dot \varphi_\kappa(j/n))\log(1-\dot\varphi_\kappa(j/n))\big]\nonumber\\
&&\ \ \ \ \ \ \ \ \ \ \ \ \   -\dot
\varphi_\kappa(j/n)\log\big(1-{\bar{X}_j^n}/{\sigma_n(j/n)}\big)-
 (1-\dot \varphi_\kappa(j/n))\log\big({\bar X_j^n}/{\sigma_n(j/n)}\big)\nonumber\\
&&\ \ \ \ \ \leq \  0- \log\eta+\log (s_{j-k_0+1})\  \leq\ \log(j/\eta^2),
\label{lb_eta_j_estimate}\end{eqnarray}
the last inequality using (\ref{eta}) again.

\medskip
{\it Step 4.} We now argue (\ref{LD LB goal}) via representation (\ref{V}). Let
$$\mathbb A \ = \ \Big\{\sup_{0\leq j\leq n-1}|\bar{X}^n_j
-\frac{1}{n}\sum_{l=0}^j
\dot{\varphi}_\kappa (l/n)|>\frac{\theta}{2n^{1/8}}\Big\}.$$
Since
$R(v^n_j\| \rho_{\sigma_n(j/n),\bar{X}^n_j}) =0$ for $k_0\leq j\leq
k_1$, the sum in (\ref{V}) equals
\begin{eqnarray}
\bar{E}\Big[\frac{1}{n} \sum_{j=k_0}^{n-1} R(v^n_j\|
\rho_{\sigma_n(j/n), \bar{X}^n_j})\Big] & = &
\bar{E}\Big[\frac{1}{n}\sum_{j=k_1+1}^{n-1}
 L(\sigma_n(j/n),\bar{X}^n_j, \dot{\varphi}_\kappa (j/n))\Big]\nonumber\\
&=& \bar{E}\Big[ \frac{1}{n} \sum^{n-1}_{j=k_1+1} L(\sigma_n(j/n),
\bar{X}_j^n, \dot{\varphi}_\kappa (j/n));
\mathbb A \Big]\nonumber\\
&&\  \  + \ \bar{E}\Big[ \frac{1}{n} \sum_{j=k_1+1}^{n-1}
L(\sigma_n(j/n), \bar{X}_j^n, \dot{\varphi}_\kappa (j/n));\mathbb A^c
\Big]\nonumber \\
& = & A_1 + A_2. \label{Split}\end{eqnarray}
\medskip
{\it Step 5.} We now treat the first term $A_1$ in (\ref{Split}).
Combining (\ref{lb_eta_j_estimate}) with \eqref{Rosenthal}, we
obtain, for large $n$,
\begin{eqnarray}
&&\bar{E}\Big[ \frac{1}{n} \sum^{n-1}_{j=k_1+1} L(\sigma_n(j/n),
\bar{X}_j^n, \dot{\varphi}_\kappa (j/n));{\sup_{0\leq j\leq
n-1}|\bar{X}^n_j
-\frac{1}{n}\sum_{l=0}^j \dot{\varphi}_\kappa (l/n)|>\frac{\theta}{2n^{1/8}}} \Big]\nonumber\\
&&\  \leq \ \log (n/\eta^2) \bar{P}\Big[\sup_{0 \leq j/n\leq 1}
\Big|\bar{X}^n_j
-\frac{1}{n}\sum_{l=0}^j\dot{\varphi}_\kappa (l/n)\Big|>\frac{\theta}{2n^{1/8}}\Big]\nonumber\\
&&\  \leq\  C \log (n/\eta^2) n^{-3/2}\ < \ \eps .\label{away}\end{eqnarray}
\medskip

{\it Step 6.} For the other term $A_2$ in \eqref{Split}, we split it
into two sums depending on when index $j\leq \delta n$ or $j\geq
\delta n$ (recall $\delta$ from \eqref{alpha}):
\begin{eqnarray}
&&\bar{E}\Big[ \frac{1}{n} \sum_{j=k_1+1}^{\lfloor\delta n\rfloor}
L(\sigma_n(j/n), \bar{X}_j^n, \dot{\varphi}_\kappa (j/n)); \mathbb A^c\Big]\nonumber \\
&&\ \ \ \ \ \ \ \ \ \ \ \ \  \ +\ \bar{E}\Big[ \frac{1}{n} \sum_{j=\lceil\delta n\rceil}^{n-1} L(\sigma_n(j/n),
\bar{X}_j^n, \dot{\varphi}_\kappa (j/n)); \mathbb A^c\Big]\nonumber \\
&&=\ B_1 + B_2
\label{B1+B2}.\end{eqnarray}
\medskip
{\it Step 7.} To estimate $B_1$, we divide it further into two terms
corresponding to sums on indices $j\leq n^{7/8}$ and $n^{7/8}\leq j\leq \delta
n$:
\begin{eqnarray*}
&&\bar{E}\Big[ \frac{1}{n} \sum^{\lfloor n^{7/8}\rfloor }_{j=k_1+1} L(\sigma_n(j/n),
\bar{X}_j^n, \dot{\varphi}_\kappa (j/n)); \mathbb A^c \Big]\\
&&\ \ \ \ \ \ \ \ \ \ \ \ \  \ +\ \bar{E}\Big[ \frac{1}{n}
\sum^{\lfloor \delta n\rfloor }_{j= \lceil n^{7/8}\rceil} L(\sigma_n(j/n), \bar{X}_j^n,
\dot{\varphi}_\kappa (j/n));\mathbb A^c \Big]\\
&&\ \ = \ D_1 + D_2.
\end{eqnarray*}

The first
term $D_1$,
using (\ref{lb_eta_j_estimate}), is bounded for all large $n$ by
\begin{equation}
\label{step7} n^{-1/8} \log (n^{7/8}/\eta^2)\ <\ \eps.\end{equation}

The second term $D_2$,
using again
(\ref{lb_eta_j_estimate}),
is bounded in absolute value by
\begin{eqnarray*}
&& -\delta \log \eta+\frac1n\sum_{j=\lceil n^{7/8}\rceil }^{\lfloor \delta n\rfloor} |\log (s_{j-k_0 +1}/n)|\\
&&\ \ - \bar E\Big[\frac{1}{n}\sum_{j=\lceil n^{7/8}\rceil}^{\lfloor \delta n\rfloor }
\log \bar X_j^n
; \sup_{0\leq j\leq n-1}|\bar X_j^n-
\frac1n\sum_{l=0}^j\dot\varphi_\kappa(l/n)|\leq \frac{\theta}{2n^{1/8}}\Big].
\end{eqnarray*}
Now, note that \eqref{eta} implies for $j\geq k_1 +1$ that
$$\frac{j}{(1-\eta)n}\ \leq\  \frac{s_{j-k_0+1}}{n}\ \leq\  \frac{j}{\eta n}.$$
Then, as $0<\delta<1/3$ and $0<\eta<1/2$, we have for large $n$ that
\begin{eqnarray*}
\frac1n\sum_{j=\lceil n^{7/8}\rceil }^{\lfloor \delta n\rfloor} |\log (s_{j-k_0 +1}/n)|&\leq&
2\int_0^\delta \max\Big\{\Big|\log\Big(\frac{x}{1-\eta}\Big)\Big|,\Big|\log\Big(\frac{x}{\eta}\Big)\Big|\Big\}\,dx\\
&\leq& -6\delta\log \delta -2\delta\log\eta.\end{eqnarray*}
Also, noting
\eqref{LB phi}, we have for large $n$ that
\begin{eqnarray*}
&& - \bar E\Big[\frac{1}{n}\sum_{j=\lceil n^{7/8}\rceil}^{\lfloor \delta n\rfloor }
\log \bar X_j^n
; \sup_{0\leq j\leq n-1}|\bar X_j^n-
\frac1n\sum_{l=0}^j\dot\varphi_\kappa(l/n)|\leq \frac{\theta}{2n^{1/8}}\Big]\\
&&\ \ \leq \ -\frac{1}{n}\sum_{j=
\lceil n^{7/8}\rceil}^{\lfloor\delta n\rfloor} \log \Big[\frac{\theta j}{n} -\frac{\theta}{2n^{1/8}}\Big]
\\
&&\ \ \leq \ -\frac{1}{n}\sum_{j=\lceil
n^{7/8}\rceil}^{\lfloor \delta n\rfloor} \log \Big[\frac{\theta j}{2n}\Big] \ \leq \ -2\int_0^\delta \log\frac{\theta x}{2}\, dx\ \leq \ -4\delta\log \delta -2\delta\log \theta.
\end{eqnarray*}
By combining these estimates, we have $D_2$ is bounded by a function of $\delta,\eta,\theta$ which, given \eqref{alpha2}, can be made small:
\begin{equation}
\label{3.19}
D_2 \ \leq \ -10\delta\log \delta - 3\delta\log\eta -2\delta\log \theta \ < \ \eps.
\end{equation}

\medskip\noindent
{\it Step 8.} We now estimate the second term $B_2$ in
\eqref{B1+B2}. Note,
 for $ n > \delta^{-8}$,
  by \eqref{LB phi}
  the event
\begin{equation*}
\Big\{\sup_{0\leq j\leq n-1}|\bar{X}^n_j -\frac{1}{n}\sum_{l=0}^j
\dot{\varphi}_\kappa (l/n)|\leq\frac{\theta}{2n^{1/8}}\Big\}\\
 \ \subset \ \Big\{\inf_{j\geq \delta n} \bar{X}^n_j \geq
\frac{\delta\theta}{2}\Big\}.\end{equation*} Hence, for large $n$,
\begin{eqnarray*}
B_2 &\leq&
\bar{E}\Big[\frac{1}{n}\sum_{j=\lceil\delta n\rceil}^{n-1}
 L(\sigma_n(j/n),\bar{X}^n_j, \dot{\varphi}_\kappa (j/n));\mathbb A^c \cap \Big\{\inf_{j\geq \delta n} \bar{X}^n_j \geq
\frac{\delta\theta}{2} \Big\}\Big].
\end{eqnarray*}

Note, from assumption (\ref{s_n}) and $\bar{X}^n_j\leq j/n$,
that
$\delta/(1-\eta)\leq \sigma_n(j/n)\leq 1/\eta$
and $\bar{X}^n_j\leq \sigma_n(j/n) (1-\eta)$ when $\delta\leq j/n \leq 1$ for all large $n$.
Also,
 $L( t,x,y)$ is continuous, and therefore also bounded and
uniformly continuous on the compact set (cf. definition of $L$ (\ref{Ldef}), (\ref{LL})),
\begin{equation}
\label{compact_set}
\Big\{(t,x,y):\; \frac{\delta}{1-\eta}\leq t\leq
1/\eta,\; \frac{\delta\theta}{
4}\leq x \leq (1-\eta/2)t,\;
0\leq y\leq 1\Big\}.
\end{equation}
Then,
\begin{eqnarray}
&&\limsup_{n\rightarrow\infty} \bar{E}\Big[\frac{1}{n}\sum_{j=\lceil\delta
n\rceil}^{n-1}
 L(\sigma_n(j/n),\bar{X}^n_j, \dot{\varphi}_\kappa (j/n));\mathbb A^c\cap \Big\{\inf_{j\geq \delta n} \bar{X}^n_j \geq
\frac{\delta\theta}{2} \Big\}\Big]\nonumber\\
&& \ \ \ \ \ \ \ \leq\  \limsup_{n\to\infty}\frac{1}{n}\sum_{j=\lceil\delta n\rceil}^{n-1}
 L\Big(\sigma_n(j/n),\frac{1}{n}\sum_{l=0}^j\dot\varphi_\kappa(l/n), \dot{\varphi}_\kappa (j/n)\Big).
\label{lb_last_piece}\end{eqnarray} Further,
\begin{equation}
\label{last_supremum_phi}\lim_{n\rightarrow \infty} \sup_{1\leq
j\leq n} \Big|\frac{1}{n}\sum_{l=0}^j \dot{\varphi}_\kappa (l/n)
-\int_0^{j/n} \dot{\varphi}_\kappa (s)ds\Big| \ =\ 0,\end{equation}
as $\dot\varphi_k$ is piecewise constant and bounded.
Then, given the bounds on $\sigma_n(j/n)$ above,
 $\theta j/n\leq (1/n)\sum_{l=0}^{j} \dot\varphi_k(l/n)\leq j/n$ (cf. (\ref{LB phi})),
 and uniform continuity of $L$ on the compact set (\ref{compact_set}),
we may analogously bound (\ref{lb_last_piece}) by
\begin{eqnarray}
\lim_{n\to\infty}\frac{1}{n}\sum_{j=\lceil\delta n\rceil}^{n-1}
 L(\sigma_n(j/n),\varphi_\kappa(j/n), \dot{\varphi}_\kappa (j/n))
\ =\
 \int_\delta^1L(\alpha t,\varphi_\kappa(t),\dot\varphi_\kappa(t))dt.
\label{step8}\end{eqnarray}
\medskip
{\it Step 9.}  Finally, with respect to the second term in
(\ref{V}), by (\ref{as}) and (\ref{last_supremum_phi}), in the sup
topology, $\lim_{n\rightarrow \infty}h(\bar{X}^n_\cdot)  =
h(\varphi_\kappa(\cdot))$.

We now combine all bounds to conclude the proof of \eqref{LD LB
goal}. By \eqref{V}, and bounds \eqref{away}, \eqref{step7}, \eqref{3.19}, \eqref{step8}, we have
\begin{eqnarray*}
\limsup_{n\to\infty} V^n&\leq&\limsup_{n\to\infty}
\bar{E}\Big[\frac{1}{n}\sum_{j=k_1+1}^{n-1}
 L(\sigma_n(j/n),\bar{X}^n_j, \dot{\varphi}_\kappa (j/n)) + h(\bar{X}^n_\cdot)\Big]\\
&\leq& \ 3\eps+\int_\delta^1L(\alpha
t,\varphi_\kappa(t),\dot\varphi_\kappa(t))\, dt + h(\varphi_k).
\end{eqnarray*}
Then, by \eqref{alpha}, \eqref{h kappa}, and  \eqref{I kappa}, we
obtain \eqref{LD LB goal}.


\section{Proofs of Theorems \ref{T3} and \ref{T4}}
\label{ODE_section}

We first address the proof of Theorem \ref{T3}, and later the proof
of Theorem \ref{T4} in Subsection \ref{proof_of_T4}.
 Since $Z_n/n=\frac{n+k_0-1}{n}X_{n+k_0-1}(1)$, by contraction principle
Theorem  \ref{T2} implies Theorem \ref{T3} with the rate function $I$ given in (\ref{contracted_rate}).
As $x\mapsto \la x$ is a bounded continuous function on $[0,1]$,  by
Varadhan's Integral Lemma (see \cite[Theorem
4.3.1]{Dembo-Zeitouni-98}, or \cite[Theorem
1.3.4]{Dupuis-Ellis-97}), this implies that the limit \eqref{1D
Pressure} exists, and equals
\begin{eqnarray}
  \La(\la)&=&\sup_\varphi\{ \la \varphi(1)-I(\varphi)\}\nonumber\\
    &=&\sup_{\varphi: \varphi(0)=0}\Big\{ \int_0^1\big[\la \dot\varphi (t)
  -\dot\varphi (t)\log\frac{\alpha t \dot\varphi (t)}{\alpha
  t-\varphi(t)}\nonumber\\
  &&\ \ \ \ \ \ \ \ \ \ \ \ \ \ \ \ \ \ \ \ \ \ \ \ \  -
  (1-\dot\varphi (t))\log \frac{\alpha t (1-\dot\varphi
  (t))}{\varphi(t)}\big]\,dt\Big\}.
\label{PL}\end{eqnarray}
 Direct derivation of formula \eqref{Pressure_2}
 or even formula \eqref{R-def}
 for $\alpha=2$ from \eqref{PL} seems quite challenging (cf. \cite{Zhang-Dupuis-08}).
The  Euler equations are:
\begin{eqnarray*}
  \frac{\ddot{\varphi}}{\dot\varphi (1-\dot\varphi )}&=&\frac{\alpha}{\alpha t -\varphi}-\frac{1}{\varphi} \label{E1}\\
  \varphi(0)&=&0\\
  \frac{\dot\varphi (1)}{1-\dot\varphi (1)}&=& \frac{\alpha-\varphi(1)}{\varphi(1)}e^\la .
\end{eqnarray*}
Numerical evidence suggests that the solutions of the Euler equations indeed give the correct answer.

\subsection{Proof of Theorem \ref{T3}}\label{sec0}

In this section we show that one can use Theorem \ref{T2} to set up the differential equation (\ref{ODE}) which implies
formula \eqref{R-def}. Recall notation
for the moment generating function.
As we already noted, Theorem \ref{T2} implies that
\begin{equation}
\label{LLL} \frac{1}{n}\log m_n(\lambda) \ \rightarrow \
\La(\lambda)
\end{equation}
 with $\La(\la)$ given by \eqref{PL}.

Formula \eqref{R-def} follows from the following additional fact.
\begin{proposition}\label{P1} $\La(\la)$, as defined by \eqref{LLL}, is differentiable and satisfies
equation \eqref{ODE}.
\end{proposition}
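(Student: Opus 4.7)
The plan is to establish \eqref{ODE} first at points where $\Lambda'(\lambda)$ exists, and then to upgrade to all $\lambda$ via continuity. The key identity to exploit is \eqref{recursion_divided_through}: once we show that $\frac{m_n'(\lambda)}{n m_n(\lambda)}$ and $\frac{m_{n+1}(\lambda)}{m_n(\lambda)}$ converge appropriately, the ODE drops out by letting $n \to \infty$.

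First, each $\lambda \mapsto \tfrac{1}{n}\log m_n(\lambda)$ is convex, so $\Lambda$ is convex on $\mathbb{R}$, hence continuous, and differentiable outside an at most countable set $N$. Since $Z_n/n \in [0,1]$, $\Lambda$ is finite everywhere and $I = \Lambda^*$, $\Lambda = I^{**} = I^*$. Therefore, at each $\lambda \notin N$, $\sup_x[\lambda x - I(x)] = \Lambda(\lambda)$ is attained at the unique maximizer $x^*(\lambda) = \Lambda'(\lambda)$ (subgradient of the conjugate).

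Second, fix $\lambda \notin N$ with $\lambda \neq 0$, and introduce the tilt $d\mu_n/dP = e^{\lambda Z_n}/m_n(\lambda)$, so that $E_{\mu_n}[Z_n/n] = m_n'(\lambda)/(n m_n(\lambda))$. Theorem \ref{T2} plus contraction gives the LDP for $Z_n/n$ under $P$ with good rate $I$. For any closed $F \subset [0,1]$ with $x^*(\lambda) \notin F$, Varadhan's upper bound (applicable since $x \mapsto \lambda x$ is bounded continuous on $[0,1]$) yields
\[
\limsup_{n\to\infty}\tfrac{1}{n}\log E\bigl[e^{\lambda Z_n};\, Z_n/n \in F\bigr] \;\leq\; \sup_{x\in F}[\lambda x - I(x)] \;<\; \Lambda(\lambda),
\]
the strict inequality coming from uniqueness of the maximizer. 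Combined with $\tfrac{1}{n}\log m_n(\lambda) \to \Lambda(\lambda)$, this gives $\mu_n(Z_n/n \in F) \to 0$ exponentially, so $Z_n/n \to x^*(\lambda)$ in $\mu_n$-probability; boundedness of $Z_n/n$ then forces $E_{\mu_n}[Z_n/n] \to x^*(\lambda) = \Lambda'(\lambda)$.

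Third, passing to the limit in \eqref{recursion_divided_through} and using $s_n/n \to \alpha$,
\[
\frac{m_{n+1}(\lambda)}{m_n(\lambda)} \;\longrightarrow\; L(\lambda) \;:=\; e^\lambda + \frac{1-e^\lambda}{\alpha}\Lambda'(\lambda).
\]
From the telescoping identity $\tfrac{1}{n}\log m_n(\lambda) = \tfrac{1}{n}\log m_1(\lambda) + \tfrac{1}{n}\sum_{k=1}^{n-1}\log\bigl(m_{k+1}(\lambda)/m_k(\lambda)\bigr)$, Cesàro summation of the convergent sequence $\log(m_{k+1}/m_k)$ (the limit is finite since $\Lambda(\lambda)$ is finite, forcing $L(\lambda) > 0$) gives $\log L(\lambda) = \Lambda(\lambda)$. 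This is precisely \eqref{ODE} at $\lambda$.

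Fourth, solving for the derivative at any $\lambda \notin N \cup \{0\}$ yields $\Lambda'(\lambda) = \alpha(e^{\Lambda(\lambda)} - e^\lambda)/(1 - e^\lambda)$, whose right side is continuous by continuity of $\Lambda$. A convex function whose a.e.\ derivative coincides with a continuous function is $C^1$, so $\Lambda \in C^1(\mathbb{R}\setminus\{0\})$. At $\lambda = 0$, expanding with $\Lambda(0)=0$ and applying l'H\^opital consistent with the one-sided convex derivatives identifies $\Lambda'(0) = \alpha/(\alpha+1)$ (matching the LLN), completing differentiability on all of $\mathbb{R}$. The main obstacle is Step 2: identifying $\frac{m_n'(\lambda)}{n m_n(\lambda)}$ with $\Lambda'(\lambda)$ requires both the unique-maximizer argument (hence differentiability of $\Lambda$ at $\lambda$) and the exponential tightness of the tilted laws on $[0,1]$, which fortunately comes for free from the boundedness of $Z_n/n$.
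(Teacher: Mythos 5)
Your proof is correct, but it takes a genuinely different route from the paper's. The paper proceeds by complex analysis: it first shows (Proposition \ref{CL4.1}, adapting B\'ona's real-rootedness argument for the polynomials $p_n(u)$ with $m_n(\la)=p_n(e^\la)$) that $m_n(z)\ne 0$ in the strip $|\Im z|<\pi$, then uses the Borel--Carath\'eodory inequality to show that $\{n^{-1}\log m_n\}$ is a normal family of holomorphic functions there, and invokes a Vitali-type theorem to upgrade the pointwise convergence on $\RR$ from \eqref{LLL} to locally uniform convergence; this yields analyticity of $\La$ and the convergence $m_n'/(nm_n)\to\La'$ in one stroke, after which the ODE follows by passing to the limit in \eqref{recursion_divided_through}. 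You instead work entirely on the real line: convexity of $n^{-1}\log m_n$ gives differentiability of $\La$ off a countable set; at such points the exponential-tilting/Varadhan concentration argument identifies $m_n'(\la)/(nm_n(\la))=E_{\mu_n}[Z_n/n]\to\La'(\la)$; Ces\`aro summation in the telescoped logarithm recovers $e^{\La(\la)}$ as the limit of the ratios, giving \eqref{ODE} a.e.; and the ODE itself is then used to bootstrap $C^1$ regularity everywhere, including $\la=0$ via the one-sided convex derivatives and l'H\^opital. Both arguments are sound. The paper's route costs the zero-free-strip analysis but delivers analyticity of $\La$ and the very ingredient reused later for the CLT (Bryc's criterion in Theorem \ref{T4} needs exactly the zero-free region of Proposition \ref{CL4.1}); your route is softer, needs only Theorem \ref{T2} as input, and delivers exactly the differentiability the proposition asserts. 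Two small points worth tightening: the assertion ``$I=\La^*$'' is neither needed nor known a priori unless $I$ is convex --- all your argument uses is $\La=I^*$ from Varadhan's lemma, from which any maximizer of $\la x-I(x)$ lies in $\partial\La(\la)$ and is hence unique at differentiability points; and the Varadhan upper bound restricted to a closed set $F$ deserves a word of justification (cover $F\cap[0,1]$ by finitely many small closed intervals on which $\la x$ has small oscillation and apply the LDP upper bound to each).
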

It is straightforward to verify that \eqref{R-def} satisfies
\eqref{ODE}. The following argument shows the uniqueness:
 Suppose $\La_1,\La_2$ are two solutions with initial condition $\La(0)=0$. If $\La_1(t)=\La_2(t)$ for
 some $t>0$,
then they coincide for all $t>0$. Therefore, we must have $\La_1(t)>\La_2(t)$ for all $t>0$.
By the mean value theorem,
there is $t_0>0$ such that $\La_1'(t_0)-\La_2'(t_0)>0$.
But the equation gives $\La_1'(t)-\La_2'(t)=\alpha \frac{e^{\La_1(t)}-e^{\La_2(t)}}{1-e^t}<0$ for all $t>0$.
Similarly,  if $\La_1(t)=\La_2(t)$ for some $t<0$,
then they coincide for all $t<0$. Therefore, we must have $\La_1(t)>\La_2(t)$ for all $t<0$.
By the mean value theorem,
there is $t_0<0$ such that $\La_1'(t_0)-\La_2'(t_0)=\frac{\La_1(t)-\La_2(t)}{t}<0$.
But the equation gives $\La_1'(t)-\La_2'(t)=\alpha \frac{e^{\La_1(t)}-e^{\La_2(t)}}{1-e^t}>0$ for all $t<0$.

\subsection{Proof of Proposition \ref{P1}}
As
mentioned in the introduction, the differential equation is easy to derive heuristically from
\eqref{recursion_divided_through}; the main technical difficulty is in justifying the convergence of various expressions.

To this end, the key
ingredient is the control of the complex zeroes of $m_n(z)$, based on (the proof of)  \cite[Theorem 1]{bona-2007}.  Note that the assumptions in the following Proposition \ref{CL4.1} include $s_n$ satisfying (\ref{s_n}), and $s_n = \alpha n$ for $\alpha = 1/2,1$ with $k_0\leq s_1$.
\begin{proposition}\label{CL4.1}
Suppose $m_n(z)$ is defined by \eqref{recursion} with initial condition $m_1(\la)=e^{k_0\la}$
such that $\{s_n\}$ satisfies
$s_1\geq k_0$,
$s_n\geq\max\{ k_0,1\}$ for $n\geq 2$.
We also assume that there is $n_0\geq 1$ such that $s_{n_0}>k_0$ when $k_0=1,2,\dots$ and
that there is $n_1\geq 2$ such that $s_{n_1}>1$ when $k_0=0$.
Then $m_n(z)\ne 0$ in the strip $|\Im(z)|<\pi$.
\end{proposition}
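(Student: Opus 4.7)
The approach is to recognize $m_n$ as a polynomial in $e^z$ and reduce the strip non-vanishing to a real-rootedness statement, which is then proved by induction using Rolle's theorem (in the spirit of B\'ona's argument \cite{bona-2007} for Stirling permutations). Since $Z_n$ is supported on $\{k_0,k_0+1,\ldots,n+k_0-1\}$, the substitution $w=e^z$ gives $m_n(z)=P_n(w)$, where $P_n$ is the probability generating function of $Z_n$---a polynomial of degree at most $n+k_0-1$ satisfying $P_n(1)=1$. The exponential map $z\mapsto e^z$ carries the strip $\{|\Im z|<\pi\}$ bijectively onto $\CC\setminus(-\infty,0]$, so $m_n$ is zero-free in the strip if and only if every root of $P_n$ is a non-positive real number.

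Translating the recursion \eqref{recursion} via $m_n'(\lambda)=w P_n'(w)$ yields
$$s_n P_{n+1}(w) \;=\; w\bigl[(1-w)P_n'(w)+s_n P_n(w)\bigr] \;=\; w(1-w)^{s_n+1}\,\frac{d}{dw}\!\left[\frac{P_n(w)}{(1-w)^{s_n}}\right],$$
so the non-zero roots of $P_{n+1}$ coincide with the zeros of $F_n'(w)$ in $\{w<1\}$, where $F_n(w):=P_n(w)(1-w)^{-s_n}$. I would then prove by induction on $n$ that all roots of $P_n$ lie in $(-\infty,0]$. The base case $P_1(w)=w^{k_0}$ is immediate. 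For the inductive step, note that on $(-\infty,1)$ the factor $(1-w)^{-s_n}$ is a positive real-analytic function, so $F_n$ is real on this interval with exactly the same zeros and multiplicities as $P_n$, all in $(-\infty,0]$ by hypothesis. Rolle's theorem applied between consecutive distinct zeros of $F_n$, combined with the usual multiplicity reduction at repeated zeros, produces at least $\deg P_n - 1$ zeros of $F_n'$ in $(-\infty,0]$; one additional zero is supplied by the asymptotic behavior $F_n(w)\to 0$ as $w\to-\infty$ (which holds whenever $s_n$ exceeds $\deg P_n$), forcing an extremum of $F_n$ strictly to the left of its smallest root. A degree count then shows these zeros account for all roots of $Q_n:=(1-w)P_n'(w)+s_n P_n(w)$, so $P_{n+1}=wQ_n/s_n$ has every root in $(-\infty,0]$, closing the induction.

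The main technical obstacle is the careful tracking of the degree of $P_n$ together with the multiplicities under the recursion, particularly in the boundary case where $s_n$ equals $\deg P_n$ and the leading coefficient of $Q_n$ vanishes so that its degree drops by one. The proposition's hypotheses on $\{s_n\}$---$s_1\ge k_0$, $s_n\ge\max\{k_0,1\}$ for $n\ge 2$, together with the eventual strict inequalities $s_{n_0}>k_0$ or $s_{n_1}>1$---are calibrated so that $F_n(w)\to 0$ at $-\infty$ whenever $\deg Q_n$ is full, supplying the extra Rolle zero, while conversely $\deg Q_n$ drops exactly when no extra zero is required. In the special cases $s_n=\alpha n$ with $\alpha=2$ the $P_n$ coincide with the plateau (or second-order Eulerian) polynomials whose real-rootedness is precisely B\'ona's result \cite{bona-2007}; the present argument adapts his induction to the general sequences $\{s_n\}$ considered here.
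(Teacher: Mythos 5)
Your proposal is correct and follows essentially the same route as the paper: both reduce the claim to showing that every root of the polynomial $p_n$ with $m_n(z)=p_n(e^z)$ lies in $(-\infty,0]$, and both prove this by a B\'ona-style induction using the identity $p_{n+1}=\frac{u}{s_n}(1-u)^{s_n+1}\frac{d}{du}\left[(1-u)^{-s_n}p_n(u)\right]$ together with Rolle's theorem for the interlacing roots. The only variation is the mechanism producing the extra leftmost root (you use the decay of $(1-w)^{-s_n}P_n(w)$ as $w\to-\infty$ when $s_n>\deg P_n$, while the paper compares the signs of $p_{n+1}(u_1)$, $p_n'(u_1)$ and of the leading coefficients), and both versions rest on the same degree bookkeeping $s_n\ge\deg P_n$ that the paper likewise leaves largely implicit.
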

\begin{proof}
We note that  $m_n(\la)=p_n(e^\la)$ for a polynomial $p_n(u)$  with
non-negative coefficients. Then, to prove the result, we repeat the
proof of \cite[Theorem 1]{bona-2007} to deduce that all complex
zeros of $p_n(u)$ are real,  and non-negative for $n\geq 1$. [Some
minor details differ from \cite{bona-2007}.]

Since $m_n'(\la)=up_n'(u)$,  \eqref{recursion}  gives
\begin{eqnarray}
p_{n+1}(u)&=&\frac{u(1-u)}{s_n}p_n'(u)+up_n(u)\nonumber \\
&=&\frac{u}{s_n}(1-u)^{s_n+1}
\frac{d}{du}\left[(1-u)^{-s_n}p_n(u)\right]. \label{Bona}
\end{eqnarray}

Note that  $p_1(u)=u^{k_0}$ and that $s_1\geq k_0$. We first give the proof for the case when $s_1> k_0\geq 1$.
To reach our conclusion we prove by induction the following statement.
\begin{quote} For $n\geq 2$,
$p_n(u)$ is a polynomial of degree $n-1+k_0$ with a root of multiplicity $k_0$ at $u=0$ and $n-1\geq 1$
   strictly negative simple roots.
\end{quote}
As  $s_1>k_0$, polynomial $p_{2}(u)=\frac{1}{s_1}u^{k_0} ((s_{1}-k_0)u+ k_0)$
satisfies the inductive statement.
Suppose for some $n\geq 2$, polynomial  $p_n(u)$ is of degree $n-1+k_0$, has $n-1$ simple negative roots
 $u_1<u_2<\dots<u_{n-1}$ and
  a root of multiplicity $k_0$ at $u=0$. Then, from the first equality in \eqref{Bona},
  $p_{n+1}(u)$ has also a root of multiplicity $k_0$ at $u=0$.
  Clearly,   $\{u_1,u_2,\dots,u_{n-1},0\}$ are $n$ distinct  roots  of the expression
  under the derivative on the right hand side of \eqref{Bona}. Since $\{u_j\}$ are simple roots,
  the function must cross the real line, so by  Rolle's theorem,
$p_{n+1}(u)$ has $n-1$ distinct roots interlaced between the roots   of $p_n(u)$. This shows that $p_{n+1}(u)$ has  $n+k_0-1$ real roots
 in the interval $(u_1,0]$.

To end the proof, we want to show that the last $(n+k_0)$th root of
$p_{n+1}(u)$ is located to the left of $u_1$,
so that all negative roots of $p_{n+1}(u)$ must be simple. To see this, we follow
again \cite{bona-2007}: The first equation in \eqref{Bona} shows
that $p_{n+1}(u_1)$ and $p'_n(u_1)$ have opposite signs, and
$p'_n(u_1)\ne0$ as $u_1$ is a simple root. So $p_{n+1}(u_1)\ne 0$.
Since polynomials $p_{n+1}(u)$ and $p_n(u)$ have positive leading
coefficients and their degrees  differ by $1$, their signs are
opposite as $u\to-\infty$. Since $u_1$ is the smallest root,
$p_n(u)$ has constant sign for $u<u_1$ which matches the sign of
$p_{n+1}(u_1)$. Thus $p_{n+1}(u)$ must eventually cross the real
line to the left of $u_1$. This shows that $p_{n+1}(u)$ has $n$
simple strictly negative roots, and a   root of multiplicity $k_0$
at $u=0$, ending the induction proof.

Next, suppose $s_1=s_2=s_{n_0-1}=k_0\geq 1$,  but $s_{n_0}>k_0$ for some $n_0\geq 2$.
Then $p_1(u)=p_2(u)=\dots=p_{n_0}(u)=u^{k_0}$   and the inductive proof goes through with minor modifications, starting with  $p_{n_0+1}=\frac{1}{s_{n_0}}u^{k_0} ((s_{n_0}-k_0)u+ k_0)$ that replaces $p_2(u)$ in the previous
argument.

Finally, if $k_0=0$, then $p_1(u)=1$, $p_2(u)=u$. Choose first $n_1\geq 2$ such that  $s_{n_1}>1$
but $s_n=1$ for $2\leq n<n_1$.
Since  in this case the value of $s_1$ is irrelevant,
we get
$p_{2}(u)=\dots=p_{n_1}(u)=u$.
 The induction proof proceeds with minor modifications, starting with $p_{n_1+1}(u)=\frac{1}{s_{n_1}}u ((s_{n_1}-1)u+ 1)$.
\end{proof}

\begin{lemma}\label{CL4.3} $\La$ is differentiable and $\frac{m_n'(\la)}{nm_n(\la)}\to\La'(\la)$.
\end{lemma}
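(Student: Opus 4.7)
My plan is to obtain the lemma as a fairly direct consequence of Proposition \ref{CL4.1}, via a normal families argument combined with Vitali's convergence theorem.

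First I would use Proposition \ref{CL4.1} to set up the complex analytic framework. On the simply connected strip $\Sigma = \{z\in\CC: |\Im z|<\pi\}$ the function $m_n$ is holomorphic and zero-free, with $m_n(0)=1$, so there is a unique single-valued holomorphic branch $f_n : \Sigma \to \CC$ of $\tfrac{1}{n}\log m_n(z)$ normalized by $f_n(0)=0$. The whole task is to show that $f_n'(\lambda) = \frac{m_n'(\lambda)}{n m_n(\lambda)}$ converges to $\Lambda'(\lambda)$ on $\RR$.

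The next step is to show that $\{f_n\}$ is a normal family on $\Sigma$. Since $0\le Z_n\le n+k_0-1$, one has
$$|m_n(z)| \le E[e^{\Re z\cdot Z_n}] \le e^{|\Re z|(n+k_0-1)},$$
so $\Re f_n(z) \le \frac{n+k_0-1}{n}|\Re z|$, which is uniformly bounded above on compact subsets of $\Sigma$. Combined with the normalization $f_n(0)=0$, the Borel--Carath\'eodory theorem (applied on disks compactly contained in $\Sigma$) yields uniform bounds on $|f_n|$ on compacts, producing normality. This step is the main technical ingredient; once in place everything else is soft.

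I would then invoke Theorem \ref{T2} together with the contraction principle to get the LDP for $Z_n/n$, and apply Varadhan's integral lemma to the bounded continuous functional $\varphi\mapsto \lambda\varphi(1)$ to deduce that $f_n(\lambda)\to \Lambda(\lambda)$ pointwise for every $\lambda\in\RR$. Combining pointwise convergence on the accumulation set $\RR\subset\Sigma$ with the normality established above, Vitali's convergence theorem gives locally uniform convergence of $f_n$ on $\Sigma$ to a holomorphic function $\widetilde\Lambda$ whose restriction to $\RR$ is $\Lambda$. In particular $\Lambda$ is real analytic, hence differentiable.

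Finally, locally uniform convergence of holomorphic functions forces locally uniform convergence of derivatives. Restricting to $\RR$ this gives
$$\frac{m_n'(\lambda)}{n\,m_n(\lambda)} \;=\; f_n'(\lambda) \;\longrightarrow\; \widetilde\Lambda'(\lambda) \;=\; \Lambda'(\lambda)$$
for each $\lambda\in\RR$, completing the proof. The serious content is concentrated in the zero-free strip (Proposition \ref{CL4.1}) that enables the logarithm to be defined globally; once that is granted, the Borel--Carath\'eodory + Vitali package handles both differentiability of $\Lambda$ and convergence of the log-derivative in one stroke.
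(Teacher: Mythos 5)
Your proposal is correct and follows essentially the same route as the paper: the zero-free strip from Proposition \ref{CL4.1}, a holomorphic branch of $n^{-1}\log m_n$, Borel--Carath\'eodory bounds giving normality, pointwise convergence on $\RR$ from the LDP (\eqref{LLL}), and Vitali to upgrade to locally uniform convergence of the functions and hence of their derivatives. The only detail to make explicit is that Borel--Carath\'eodory must be applied on disks centered at arbitrary real points $t$, not only at $0$ (the strip is unbounded in the real direction), using the bound $|f_n(t)|\le \frac{n+k_0-1}{n}|t|$ for the value at the center; this is exactly how the paper proceeds.
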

\begin{proof}
Recall that if $f(\zeta)$ is holomorphic in $|\zeta|\leq R$, then for $0<r<R$
\begin{equation}\label{Caratheodory}
\max_{|\zeta|\leq r} |f(\zeta)|\leq \frac{R+r}{R-r}|f(0)|+\frac{2r}{R-r}\max_{|\zeta|\leq R}\Re f(\zeta)
\end{equation}
(cf. \cite[Theorem 6.31(ii)]{Burckel-79}).

  By Proposition \ref{CL4.1}, for fixed  $n\geq 1$, the function $m_n(z)$ is holomorphic and nonzero in the strip
$  |\Im z|<\pi$.  Since $m_n(z)$ is a polynomial in $e^z$ with nonnegative coefficients, $m_n(t)>0$ for all $t\in\RR$,
$m_n(t)$ is increasing on $\RR$, and $|m_n(z)|\leq m_n(|z|)$.

The function $m_n(z)$ has a holomorphic logarithm $L_n(z)$   on the strip $  |\Im z|<\pi$. Because  $m_n(t)>0$ for
$t\in\RR$, we may assume that $L_n(t)=\log(m_n(t))$ for all $t\in\RR$. For each $t\in\RR$, we can apply \eqref{Caratheodory} to $f(\zeta)=n^{-1}L_n(t+\zeta)$, $|\zeta|<\pi$, with $R=2\pi/3$ and $r=\pi/3$. This gives
\begin{eqnarray*}
\max_{|z-t|\leq \pi/3}n^{-1}|L_n(z) | &\leq&  3|n^{-1}L_n(t)|+2n^{-1}\max_{|z-t|<\pi/3}|\Re L_n(\zeta)|\\
&=& 3 |n^{-1}\log(m_n(t))|+ {2}{n^{-1}}\max_{|z-t|<\pi/3}\log |m_n(z)|
 \\
& \leq & 3 |n^{-1}\log(m_n(t))|+ 2\log( m_n(|t|+2\pi/3)).\end{eqnarray*}
Since $e^{-n|t|}\leq m_n(t)\leq e^{n|t|}$,
 $\{n^{-1}L_n(z): n\geq 1\}$
 is a normal family (i.e., a uniformly bounded family of holomorphic functions) in the disk $|z-t|\leq \pi/3$.

We now note that  $\{n^{-1}L_n(t)\}$ converges for all real $t$; this holds true by \eqref{LLL} when $\alpha>1$, or by
Proposition \ref{P 1D} when $s_n=n$
or $s_n=n/2$. A version of Vitalli's theorem, see
\cite[p. 9]{Duren-83},  implies that $n^{-1}L_n(z)\to\La(z)$ in the strip $|\Im  z| \leq \pi/3$,
 the convergence is
uniform in each disc $ |z-t | \leq \pi/3 $,
 the limit  $\La(z)$ is an analytic function of the argument $z$ in that strip, and all derivatives of $n^{-1}L_n$ converge to the corresponding derivatives of $\La$.
In particular, the sequence
$n^{-1}m_n'(\lambda)/m_n(\lambda)=n^{-1}L_n'(\la)$ converges to $\La'(\la)$
 for all real $\la$.
\end{proof}

\begin{proof}[Conclusion of Proof of Proposition \ref{P1}]
By Lemma \ref{CL4.3} the right hand side
of  \eqref{recursion_divided_through}
converges, so the left hand side must converge too:
$\frac{m_{n+1}(\lambda)}{m_n(\lambda)}\to \exp L(\la)$ for some $L$ uniformly in a neighborhood of $\la$.
Since the limit of ratios implies the same limit for $n$-th roots, we get
$\frac{1}{n}\log m_n(\lambda) \ \rightarrow \ L(\lambda)$, which identifies
$L(\la)=\La(\la)$ as the pressure. From
Lemma \ref{CL4.3},
the derivative $m_n'(\lambda)/(nm_n(\lambda))\to \La'(\la)$, so passing to the limit in
\eqref{recursion_divided_through}
one obtains
the differential equation for the
pressure.
\end{proof}
\subsection{Proof of Theorem \ref{T4}}
\label{proof_of_T4}
In the following, $s_n$ satisfies assumptions \eqref{s_n},  or $s_n =n$ or $s_n=n/2$.
The LLN follows from the strict convexity of $I$ in Theorem \ref{T3},
and $\La'(0)=\alpha/(1+\alpha)$.

For the CLT, we recall  \cite[Proposition 2]{Bryc93c} in our
context:  When $\sup_{n} m_n(\epsilon)^{1/n} <\infty$ for
some $\epsilon>0$, $0\not\in {\rm closure}(\cup_{n\geq 1} Ze_n)$
where $Ze_n$ is the zero set of $m_n(z)=E[\exp\{zZ_n\}]$, and
$Z_n/n$ satisfies an LDP, then $(Z_n - E[Z_n])/\sqrt{n}$ converges
in distribution to $N(0,\sigma^2)$ where $\sigma^2 = \Lambda''(0)$.

To verify assumptions, note by Theorem \ref{T3} that the LDP for $Z_n/n$ holds and
$\lim_{n\rightarrow \infty}n^{-1}\log m_n(\lambda) =
\Lambda(\lambda)$ for all $\lambda\in \mathbb R$,
and by
Proposition \ref{CL4.1}
that $m_n(z)$ has no zeroes in the strip
$|\Im(z)|<\pi$ for $n\geq 1$. Finally,
$\La''(0)={\alpha^2}/[{(1+\alpha)^2(2+\alpha)}]$ to finish the
proof.
\qed

\section{Concluding remarks}
 \label{Concludingremarks}

We now comment on some possible extensions.

\medskip
1. {\it Cases $\alpha = 1/2,1$.}  Although we prove a LDP for $Z_n/n$ when
$s_n$ is linear with slopes $\alpha =1/2, 1$
(Remark \ref{rmk1}, Proposition \ref{P 1D}),
the proof
of Theorem \ref{T3} for a path LDP with respect to $Z_{\lfloor nt\rfloor}/n$,
especially the lower bound argument, does not cover these cases.
The difficulty is in controlling
boundary behavior as estimate (\ref{eta}) is not available.  It
would be interesting to look further into these issues.
\medskip

2. {\it Higher order statistics.}  With respect to random graph
models, one might ask about LDP's for the vector ${\bf Z}^k_n/n =
\langle Z_1(n), \ldots, Z_k(n)\rangle/n$ where the $j$th component $Z_j(n)$
counts the number of vertices with degree $j\leq k$ for $k\geq 2$.
In principle, our method to analyze the leaves can be used to study
${\bf Z}^k_n/n$.  Indeed, the Dupuis-Ellis type arguments given here for a
path LDP for the leaves $Z_1(\lfloor nt\rfloor)/n$ (Theorem \ref{T3}) would seem
to extend to the vector-valued paths ${\bf Z}^k_{\lfloor nt\rfloor}/n$.

 However, to calculate
the pressure $${\bf \Lambda}_k(\lambda_1,\ldots,\lambda_k) \ = \
\lim_{n\rightarrow\infty} \frac{1}{n} \log
E\Big[\exp\Big\{\sum_{i=1}^k \lambda_i Z_i(n)\Big\}\Big],$$ as in
Theorem \ref{T2} for the leaves, the differential equation which now
arises for ${\bf \Lambda}_k$, in place of the ODE for $\Lambda$
(\ref{ODE}), is a quasilinear PDE with $k\geq 2$ independent variables.
These PDE's, although in principle implicitly solved by the
method of characteristics, unfortunately do not seem to admit
explicit solutions, at least to the extent found here with respect $Z_1(n)/n$,
a reason why we have focused on detailed investigations on the leaves.
It would be of interest to study better these higher order questions.

\subsection*{Acknowledgements}
We thank Professor S.R.S. Varadhan for illuminating discussions.



\def\cprime{$'$}

\end{document}